\newtheorem{theorem}[subsection]{Theorem}
\newtheorem{corollary}[subsection]{Corollary}
\newtheorem{remark}[subsection]{Remark}
\newtheorem{example}[subsection]{Example}
\newtheorem{proposition}[subsection]{Proposition}
\newtheorem{question}[subsection]{Question}
\newtheorem{problem}[subsection]{Problem}
\title{The Macías topology on integral domains}
\author{Jhixon Macías}
\address{Jhixon Macías\newline
University of Puerto Rico at Mayaguez, Mayaguez, PR, USA\newline
United States of America}
\email{jhixon.macias@upr.edu}
\keywords{Primes, Topology, Integral Domains, Golomb’s topology}
\subjclass[2020]{54A05; 54G05; 54H11}
\begin{document}

\begin{abstract}

In this manuscript a recent topology on the positive integers generated by the collection of $\{\sigma_n:n\in\mathbb{N}\}$ where $\sigma_n:=\{m: \gcd(n,m)=1\}$ is generalized over integral domains. Some of its topological properties are studied. Properties of this topology on infinite principal ideal domains that are not fields are also explored, and a new topological proof of the infinitude of prime elements is obtained (assuming the set of units is finite or not open), different from those presented in the style of H. Furstenberg. Finally, some problems are proposed.
\end{abstract}

\maketitle
\section{Introduction}
In 1955, H. Furstenberg introduced a topology $\tau_F$ on the integers $\mathbb{Z}$ generated by the collection of arithmetic progressions of the form $a+b\mathbb{Z}$ ($a,b\in \mathbb{Z}, b\geq 1$) and with it presented the first topological proof of the infinitude of prime numbers (now very famous), see \cite{furstenberg1955infinitude}. A couple of years earlier (1953), M. Brown had introduced a topology $\tau_G$ on the natural numbers $\mathbb{N}$ that turns out to be coarser than Furstenberg's topology induced on $\mathbb{N}$. This latter topology is generated by the collection of arithmetic progressions $a+b\mathbb{N}$ with $a\in \mathbb{N}$ and $b\in\mathbb{N}\cup\{0\}$ such that $\gcd(a,b)=1$. It was not until 1959 that the topology introduced by Brown was popularized by S. Golomb (now known as Golomb's topology) who in \cite{golomb1959connected} proves that Dirichlet's theorem on arithmetic progressions is equivalent to the set of prime numbers $\mathbb{P}$ being dense in the topological space $(\mathbb{N},\tau_G)$. Golomb also presents a topological proof of the infinitude of prime numbers, verifying that Furstenberg's argument also works on $(\mathbb{N},\tau_G)$. In 1969, A.M. Kirch \cite{kirch1969countable} defines a topology $\tau_K$ coarser than Golomb's topology generated by the collection of arithmetic progressions $a+b\mathbb{N}$ with $a\in \mathbb{N}, b\in\mathbb{N}\cup\{0\}$ and $a$ square-free such that $\gcd(a,b)=1$. Over the years these topologies have been widely studied, see for example the following works \cite{golomb1962arithmetica, banakh2017continuous, szczuka2010connectedness, kirch1969countable, szyszkowska2022increasing, banakh2019golomb, szczuka2013connections, longhi2023coset, del2022totally, szczuka2013darboux, szyszkowska2022properties, zulfeqarr2019some, lovas2010exotic}. In 1997, J. Knopemacher and S. Porubsky \cite{knopemacher1997topologies} generalize both Furstenberg's and Golomb's topologies in integral domains, and study the arithmetic properties obtained in these equipped with these topologies (especially with the generalization of Golomb's topology). In subsequent years, Golomb spaces have been studied in various algebraic structures, see \cite{clark2019note, spirito2020golomb, spirito2021golomb, spirito2024golomb, orum2016golomb}.

Recently, in \cite{Jhixon2023}, a new topology $\tau_M$ is introduced, called the \textit{Macías topology}, which is generated by the collection of sets $\sigma_n:=\{m\in\mathbb{N}: \gcd(n,m)=1\}$. Additionally, some of its properties are studied. For example, the topological space $(\mathbb{N},\tau_M)$ does not satisfy the $\mathrm{T}_0$ separation axiom, satisfies (vacuously) the $\mathrm{T}_4$ separation axiom, is ultraconnected, hyperconnected (therefore connected, locally connected, path-connected), not countably compact (therefore not compact), but it is limit point compact and pseudocompact. On the other hand, for $n,m\in\mathbb{N}$, it holds that $\textbf{cl}_{(\mathbb{N},\tau_M)}(\{nm\})=\textbf{cl}_{(\mathbb{N},\tau_M)}(\{n\})\cap \textbf{cl}_{(\mathbb{N},\tau_M)}(\{m\})$ where $\textbf{cl}_{(\mathbb{N},\tau_M)}(\{n\})$ denotes the closure of the singleton set $\{n\}$ in the topological space $(\mathbb{N},\tau_M)$. Furthermore, in \cite{jhixon2024}, a new topological proof of the infinitude of prime numbers is presented (different from the proofs of Furstenberg and Golomb). In the same work, the infinitude of any non-empty subset of prime numbers is characterized in the following sense: if $A\subset \mathbb{P}$ (non-empty), then $A$ is infinite if and only if $A$ is dense in $(\mathbb{N},\tau_M)$, see \cite[Theorem 4]{jhixon2024}.

In the present manuscript, the Macías topology is generalized over integral domains that are not fields and some of its basic properties are studied (Section \ref{section2}), and it is compared with Golomb's topology (Subsection \ref{subsection2}). We also study some properties of the Macías topology in principal ideal domains (PIDs) that are not fields (Section \ref{section3}). Furthermore, the \textit{infinitude of prime elements} is characterized and the proof of the infinitude of primes presented in \cite{jhixon2024} is generalized over infinite PIDs with a finite number of units (Subsection \ref{subsection3}).

\section{The Macías topology on integral domains}\label{section2}

Throughout this section, we denote by $R$ a commutative ring with identity $1=1_R\neq 0_R$, and without zero divisors (an integral domain) that is not a field. For each $r\in R$, define
\begin{equation*}
    \sigma_r:=\{s\in R: \langle r\rangle+\langle s \rangle =R\}=\{s\in R \mid \ \ \exists u,v \in R: ur+vs=1 \}
\end{equation*}
where for all $r\in R$, $\langle r \rangle$ is the ideal generated by $r$. Let $\beta_R:=\{\sigma_r: r\in R\}$.

\begin{theorem}
The set $\beta_R$ is a basis for some topology on $R$.
\end{theorem}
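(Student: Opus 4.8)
The plan is to verify the two standard axioms for a collection of sets to be a basis for a topology: first, that $\beta_R$ covers $R$, and second, that the intersection of any two basis elements is a union of basis elements (equivalently, every point in the intersection of two basis elements lies in some basis element contained in that intersection). Since the defining property here is coprimality of generated ideals, I expect both conditions to reduce to elementary facts about greatest common divisors (or, more precisely, about comaximality of ideals) in a commutative ring.

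For the covering condition, I would observe that $1 \in \sigma_r$ for every $r \in R$, since $\langle r \rangle + \langle 1 \rangle = R$ always holds (indeed $1 = 0\cdot r + 1\cdot 1$). Hence every element of $R$ lies in at least one basis element—in fact every $\sigma_r$ contains $1$, so $\bigcup_{r \in R} \sigma_r = R$ is witnessed even more strongly. Actually, to cover an arbitrary $s \in R$, note $s \in \sigma_1$ because $\langle 1 \rangle + \langle s \rangle = R$; thus $\sigma_1 = R$ and the covering condition is immediate.

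For the intersection condition, suppose $s \in \sigma_r \cap \sigma_{r'}$, so that $\langle r \rangle + \langle s \rangle = R$ and $\langle r' \rangle + \langle s \rangle = R$. I want to produce a single element $t \in R$ with $s \in \sigma_t \subseteq \sigma_r \cap \sigma_{r'}$. The natural candidate is $t = r r'$: the key claim is that $\sigma_{rr'} = \sigma_r \cap \sigma_{r'}$, which would make $\beta_R$ closed under finite intersections outright and hence trivially a basis. This reduces to the ring-theoretic fact that $\langle r r'\rangle + \langle s\rangle = R$ if and only if both $\langle r\rangle + \langle s\rangle = R$ and $\langle r'\rangle + \langle s\rangle = R$; equivalently, $s$ is comaximal with a product $rr'$ exactly when it is comaximal with each factor. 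I would prove this by manipulating the Bézout-type witnesses: from $ur + vs = 1$ and $u'r' + v's = 1$, multiplying gives an expression of the form $(uu')(rr') + (\text{something})\,s = 1$, establishing one direction, while the reverse direction follows because $\langle rr'\rangle \subseteq \langle r\rangle$ forces $\langle r\rangle + \langle s\rangle = R$ whenever $\langle rr'\rangle + \langle s\rangle = R$.

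The main obstacle, and the step I would examine most carefully, is the forward direction of this comaximality identity: showing that coprimality of $s$ with each of $r$ and $r'$ yields coprimality of $s$ with the product $rr'$. The expansion of $(ur+vs)(u'r'+v's)=1$ produces $uu'\,rr' + (urv's + vsu'r' + v v' s^2)$, and one must collect all terms except the first into a single multiple of $s$ to exhibit a valid Bézout relation for $rr'$ and $s$; this is a routine but slightly fiddly computation that relies only on commutativity and the ring axioms, not on the absence of zero divisors. Once this identity is in hand, $\sigma_{rr'} = \sigma_r \cap \sigma_{r'}$ shows $\beta_R$ is a $\pi$-system containing $R = \sigma_1$, which is more than enough to conclude it is a basis.
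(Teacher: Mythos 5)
Your proposal is correct and follows essentially the same route as the paper: both establish $\sigma_1 = R$ for the covering condition and prove the identity $\sigma_{rr'} = \sigma_r \cap \sigma_{r'}$ by multiplying the two Bézout witnesses and collecting the non-$rr'$ terms into a single multiple of $s$, with the converse direction coming from $\langle rr'\rangle \subseteq \langle r\rangle$. Nothing further is needed.
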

\begin{proof}
It is clear that for all $r\in R$, we have $\langle 1 \rangle +\langle r \rangle=R$, so $\sigma_1=R$. Now, observe that $\sigma_{r_1r_2}=\sigma_{r_1}\cap\sigma_{r_2}$ for all $r_1,r_2\in R$. Indeed, let $s\in \sigma_{r_1r_2}$. Then there exist $u,v\in R$ such that
$$1=u(r_1r_2)+vs=(ur_1)r_2+vs=(ur_2)r_1+vs.$$
Thus, $s\in \sigma_{r_1}\cap\sigma_{r_2}$. On the other hand, let $s\in\sigma_{r_1}\cap\sigma_{r_2}$. Then, there exist $u_1,v_1,u_2,v_2\in R$ such that $u_1r_1+v_1s=1$ and $u_2r_2+v_2s=1$. Thus:
\begin{equation*}
    \begin{split}
    1=(u_1r_1+v_1s)(u_2r_2+v_2s) &=u_1r_1u_2r_2+u_1r_1v_2s+v_1su_2r_2+v_1sv_2s\\
    &=(u_1u_2)r_1r_2+(u_1r_1v_2+v_1u_2r_2+v_1sv_2)s,
    \end{split}
\end{equation*}
which implies that $s\in\sigma_{r_1r_2}$. Therefore, $\beta_R$ is a basis for some topology on $R$.
\end{proof}

We denote by $\widetilde{M(R)}$ the topological space $(R,\tau_R)$, where $\tau_R$ is the topology generated by $\beta_R$.

The definition of $\sigma_r$ (as defined) is inspired by the \textit{Bézout's identity}. As much as possible, we want to absorb the properties of \textit{relatively prime} elements over integral domains (even when a $\gcd$ does not necessarily exist for every pair of elements in $R$). Therefore, the study in this manuscript does not focus on $R$, but on $R^0:=R\setminus\{0\}$. In general, for $X\subset R$, $X^0=X\setminus\{0\}$. Thus, we define \textit{the Macías topological space} $M(R)$ as the topological space $(R^0, \tau_{R^0})$ where $\tau_{R^0}$ is the topology generated by the basis $\beta_{R^0}:=\{\sigma_k^0: k\in R^0\}$. The reason why we do not want to consider $\sigma_0$ is that we do not want the units of $R$, denoted by $U(R)$, to be open sets (note that $\sigma_0=U(R)$). This will become evident in Section \ref{subsection3}. For now, we have the following theorem:

\begin{theorem}\label{thsobreUnidades}
The following propositions hold over $R$:
\begin{enumerate}
    \item $u\in U(R)$ if and only if $\sigma_u=R$.
    \item $0\in\sigma_k$ if and only if $k\in U(R)$.
    \item $\textbf{cl}_{\widetilde{M(R)}}(\{0\})\subset \textbf{cl}_{\widetilde{M(R)}}(\{r\})$ for all $r\in R$.
    \item If $u\in U(R)$, then $u\in\sigma_r$ for all $r\in R$.
    \item If $u\in U(R)$, then $\textbf{cl}_{M(R)}(\{u\})=R^0$ and $\textbf{cl}_{\widetilde{M(R)}}(\{u\})=R$.
\end{enumerate}
\end{theorem}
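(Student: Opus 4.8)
The plan is to reduce every assertion to the ideal-theoretic description $s\in\sigma_r \iff \langle r\rangle+\langle s\rangle=R$, so that membership in a basic set becomes a statement about sums of principal ideals; once that dictionary is fixed, the algebraic items fall out directly and the two closures can be read off from the neighborhood structure it produces.

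First I would dispatch the purely algebraic items (1), (2), and (4). For (1), if $u\in U(R)$ then $\langle u\rangle=R$, so $\langle u\rangle+\langle s\rangle=R$ for every $s$ and hence $\sigma_u=R$; conversely $\sigma_u=R$ forces $0\in\sigma_u$, i.e. $\langle u\rangle+\langle 0\rangle=\langle u\rangle=R$, so $u$ is a unit. Item (2) is the same computation evaluated at $s=0$: $0\in\sigma_k \iff \langle k\rangle+\langle 0\rangle=\langle k\rangle=R \iff k\in U(R)$. Item (4) is immediate, since $u\in U(R)$ gives $\langle u\rangle=R$, whence $\langle r\rangle+\langle u\rangle=R$ and $u\in\sigma_r$ for every $r\in R$. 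The point worth isolating from (4) is that \emph{a unit lies in every basic open set}, which is the engine for (5).

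Next I would prove (5) from this engine. Since each unit $u$ lies in every $\sigma_r$, and $u\neq 0$ gives $u\in\sigma_r^0$ as well, $u$ belongs to every nonempty basic open set, hence to every nonempty open set of both $M(R)$ and $\widetilde{M(R)}$. Consequently any open neighborhood of an arbitrary point $x$ is nonempty and therefore contains $u$; by the neighborhood criterion for closure this says $x\in\textbf{cl}(\{u\})$. Letting $x$ range over all points yields $\textbf{cl}_{M(R)}(\{u\})=R^0$ and $\textbf{cl}_{\widetilde{M(R)}}(\{u\})=R$. I would also note, as a consistency check, that this simultaneously re-establishes hyperconnectedness of the two spaces.

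The substantive step, and the one I expect to be the main obstacle, is (3): computing $\textbf{cl}_{\widetilde{M(R)}}(\{0\})$. Here I would pass to the complementary description $\textbf{cl}_{\widetilde{M(R)}}(\{0\})=R\setminus\bigcup\{\sigma_r:0\notin\sigma_r\}$ and invoke (2), which identifies the family $\{\sigma_r:0\notin\sigma_r\}$ as exactly those $\sigma_r$ with $r$ a non-unit (the remaining ones, coming from units, all equal $R$ by (1)). A point $x$ then lies in the closure precisely when it cannot be separated from $0$ by any such set, i.e. when $\langle r\rangle+\langle x\rangle\neq R$ for every non-unit $r$. The delicate part is matching this "coprime to no non-unit" condition with $U(R)$: it requires keeping straight the direction of the neighborhood criterion and the precise family of basic sets containing $0$, and I would validate the resulting set equality against the concrete case $R=\mathbb{Z}$ before committing to the general argument.
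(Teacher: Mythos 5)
Your handling of items (1), (2), (4) and (5) is correct and is essentially what the paper compresses into ``immediate'': everything reduces to $\langle u\rangle=R$ for a unit $u$, and the decisive observation for (5) is that a unit lies in every basic open set $\sigma_r$ (resp.\ $\sigma_r^0$), each of which is nonempty because it contains $1$, so every nonempty open set of $\widetilde{M(R)}$ and of $M(R)$ contains $u$ and therefore every point lies in $\textbf{cl}(\{u\})$. Up to this point your route and the paper's coincide.

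Item (3) is where the genuine problem lies, and it is not a gap you can close: the reduction you set up, pushed to its conclusion, refutes the asserted equality rather than proving it. You correctly characterize $x\in\textbf{cl}_{\widetilde{M(R)}}(\{0\})$ as: $x$ belongs to no $\sigma_r$ with $r\notin U(R)$, i.e.\ $\langle r\rangle+\langle x\rangle\neq R$ for every non-unit $r$. But a unit $u$ has $\langle r\rangle+\langle u\rangle=R$ for \emph{every} $r$, so $u\in\sigma_r$ for any non-unit $r$ (such $r$ exist, e.g.\ $r=0$ or any nonzero non-unit, since $R$ is not a field), while $0\notin\sigma_r$ by (2); hence no unit lies in $\textbf{cl}_{\widetilde{M(R)}}(\{0\})$, so $U(R)\not\subseteq\textbf{cl}_{\widetilde{M(R)}}(\{0\})$. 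The sanity check you propose against $R=\mathbb{Z}$ confirms the failure: for any $x\neq 0$ pick a prime $p$ not dividing $x$; then $x\in\sigma_p$ and $0\notin\sigma_p$, which gives $\textbf{cl}_{\widetilde{M(\mathbb{Z})}}(\{0\})=\{0\}\neq\{\pm 1\}=U(\mathbb{Z})$. One can also see this without computation: every nonempty open set of $\widetilde{M(R)}$ contains $1$, so $R\setminus U(R)$ (which contains $0$) is never open, $U(R)$ is never closed, and therefore cannot be a closure. What your characterization actually yields is $\textbf{cl}_{\widetilde{M(R)}}(\{0\})=\{x\in R:\sigma_x\subseteq U(R)\}=\{x\in R:\sigma_x=U(R)\}$, the second equality following from (4). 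The paper's own one-line derivation of (3) from (2) conceals the same error, so the defect here lies in the statement itself rather than only in your attempt; your instinct to test the claim on $\mathbb{Z}$ before committing was exactly the right one.
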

\begin{proof}
    $(1)$, $(2)$, and $(4)$ are immediate. $(3)$ follows from $(2)$ and $(5)$ from $(4)$.
\end{proof}

On the other hand, note that the fundamental properties of $R$ for $\beta_R$ to be a basis are: commutativity and the existence of $1_R$. The condition of having no zero divisors or that $R$ is not a field is initially required due to the general interest in \textit{algebraic-arithmetical domains}, which is generated by previous studies of Golomb's topology on integral domains. We also want to avoid \textit{trivial topologies}. For example, if $R=\mathbb{Z}/2\mathbb{Z}=\{\overline{0},\overline{1}\}$, we have that $M(R)$ is the indiscrete space (see Theorem \ref{thtrivial}).

Now, some basic properties of $\widetilde{M(R)}$ and $M(R)$.

\begin{theorem}
$\widetilde{M(R)}$ is a topological semigroup.
\end{theorem}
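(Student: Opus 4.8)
The plan is to recall that calling $\widetilde{M(R)}$ a topological semigroup amounts to two things: that $(R,\cdot)$ is a semigroup, which is automatic since $R$ is a commutative ring, and that the multiplication map $\mu\colon R\times R\to R$, $\mu(s,t)=st$, is continuous when $R\times R$ carries the product topology. So the entire content of the statement is the continuity of $\mu$, and for that it suffices to show that the preimage under $\mu$ of every basic open set $\sigma_r$ (for $r\in R$) is open in $R\times R$.

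First I would isolate the one key observation, namely that the relation defining the basis is symmetric: for $r,s\in R$ we have $s\in\sigma_r$ if and only if $\langle r\rangle+\langle s\rangle=R$ if and only if $r\in\sigma_s$, since the ideal sum is symmetric in its two arguments. Combined with the identity $\sigma_{st}=\sigma_s\cap\sigma_t$ already established in the proof that $\beta_R$ is a basis, this gives the chain of equivalences
\[
st\in\sigma_r \iff r\in\sigma_{st} \iff r\in\sigma_s\cap\sigma_t \iff r\in\sigma_s \text{ and } r\in\sigma_t \iff s\in\sigma_r \text{ and } t\in\sigma_r.
\]
Consequently $\mu^{-1}(\sigma_r)=\{(s,t): st\in\sigma_r\}=\sigma_r\times\sigma_r$.

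To finish, I would note that $\sigma_r\times\sigma_r$ is a product of basic open sets and hence open in the product topology on $R\times R$. Since the preimage of each basic open set is open, $\mu$ is continuous, and therefore $\widetilde{M(R)}$ is a topological semigroup.

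The computation is short, so there is no serious obstacle; the only point requiring care is the first step, where one should avoid reproving the Bézout-product identity by hand and instead recognize that the symmetry of the coprimality relation lets one transport the condition onto the subscripts, where the previously proven identity $\sigma_{st}=\sigma_s\cap\sigma_t$ applies directly. I would also confirm that the argument is unaffected by whether $0$ or units are involved, since the identity $\mu^{-1}(\sigma_r)=\sigma_r\times\sigma_r$ holds for every $r\in R$ with no exceptional cases, including $r=0$, where it reduces to the fact that $st\in U(R)$ precisely when both $s,t\in U(R)$.
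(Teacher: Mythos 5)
Your proof is correct and follows essentially the same route as the paper: both reduce the claim to showing $\mu^{-1}(\sigma_r)=\sigma_r\times\sigma_r$ for each basic open set. Your justification of the key equivalence $st\in\sigma_r\iff s,t\in\sigma_r$ via the symmetry of coprimality together with the already-proven identity $\sigma_{st}=\sigma_s\cap\sigma_t$ is a slightly cleaner bookkeeping of the same underlying fact that the paper asserts directly at the ideal level.
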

\begin{proof}
    Let $\cdot _{R}:R\times R\to R$ be the product of $R$. Note that for each $r\in R$ we have:
    \begin{equation*}
        \begin{split}
            \cdot _{R}^{-1}(\sigma_r)&=\{(x,y)\in R: xy\in \sigma_r\}\\
            &=\{(x,y)\in R: \langle r \rangle+\langle xy\rangle=R\}\\
            &=\{(x,y)\in R: \langle r \rangle+\langle x\rangle=R \ \ \text{and} \ \ \langle r \rangle+\langle y\rangle=R\}\\
            &=\{(x,y)\in R: x,y\in \sigma_r\}\\
            &=\sigma_r\times\sigma_r,\\
        \end{split}
    \end{equation*}
    so $\cdot _{R}$ is continuous. Therefore, $\widetilde{M(R)}$ is a topological semigroup.
\end{proof}

\begin{corollary}
$M(R)$ is a topological semigroup.
\end{corollary}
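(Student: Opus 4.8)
The plan is to verify continuity of the restricted multiplication directly, rather than to invoke a general ``a subsemigroup inherits the structure'' principle. First I would record that, because $R$ is an integral domain, the product of two nonzero elements is again nonzero; hence the multiplication of $R$ restricts to a well-defined binary operation $R^0\times R^0\to R^0$, so $R^0$ really is a multiplicative subsemigroup and the only remaining task is topological.

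The key step is to recompute the preimage of a basic open set of $M(R)$ exactly as in the preceding theorem, now on $R^0$. Fix $r\in R^0$ and consider the basic open set $\sigma_r^0=\sigma_r\cap R^0$. For any $(x,y)\in R^0\times R^0$ we have $xy\neq 0$, so $xy\in\sigma_r^0$ if and only if $xy\in\sigma_r$; and by the identity $\sigma_{r_1r_2}=\sigma_{r_1}\cap\sigma_{r_2}$ established when proving that $\beta_R$ is a basis, this holds if and only if $x\in\sigma_r$ and $y\in\sigma_r$, that is, $x,y\in\sigma_r^0$. Therefore the preimage of $\sigma_r^0$ under multiplication equals $\sigma_r^0\times\sigma_r^0$, a basic open set of the product space $M(R)\times M(R)$. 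Since continuity need only be checked on a basis of the codomain, the restricted multiplication is continuous and $M(R)$ is a topological semigroup.

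The one subtlety worth flagging — and the reason I would prefer this explicit computation to a bare subspace argument — is that $M(R)$ need not carry the subspace topology induced from $\widetilde{M(R)}$: the subspace basis would also include $\sigma_0\cap R^0=U(R)$, whereas $\beta_{R^0}$ deliberately omits $\sigma_0$, and $U(R)$ need not be open in $\tau_{R^0}$. The computation above sidesteps this entirely, because it pulls back only the sets $\sigma_r^0$ with $r\in R^0$ and the vanishing of $xy$ never occurs on $R^0\times R^0$; no appeal to $\sigma_0$ is ever required, so the potential obstacle simply does not arise.
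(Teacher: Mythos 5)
Your proof is correct and follows essentially the same computation the paper uses for $\widetilde{M(R)}$: the preimage of each basic open set $\sigma_r^0$ under multiplication is $\sigma_r^0\times\sigma_r^0$, which is exactly the restriction to $R^0$ of the argument in the preceding theorem (the paper states the corollary without further proof, evidently intending this). Your added observation --- that one cannot simply invoke a subspace argument because $\beta_{R^0}$ deliberately omits $\sigma_0$ and $U(R)$ need not be open in $M(R)$, so $M(R)$ need not carry the subspace topology of $\widetilde{M(R)}$ --- is accurate and is a worthwhile point to make explicit.
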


A topological space $X$ is hyperconnected if there are not two disjoint open sets. Given that $1\in\sigma_r$ for all $r\in R$, we have the following theorem.

\begin{theorem}
$\widetilde{M(R)}$ and $M(R)$ are hyperconnected.
\end{theorem}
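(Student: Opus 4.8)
The plan is to exhibit a single point that lies in \emph{every} nonempty open set of both spaces; once such a ``universal'' point is found, hyperconnectedness is immediate, since any two nonempty open sets must then share that point and hence cannot be disjoint. The natural candidate, as the remark preceding the statement suggests, is the identity $1 = 1_R$.

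First I would record the key membership fact: for every $r \in R$ we have $1 \in \sigma_r$. This is because $\langle 1 \rangle = R$, so trivially $\langle r \rangle + \langle 1 \rangle = R$; equivalently, taking $u = 0$ and $v = 1$ gives $ur + v\cdot 1 = 1$. Thus $1$ belongs to every basic open set $\sigma_r$. Next, since an arbitrary nonempty open set $U$ in $\widetilde{M(R)}$ is by definition a union of members of the basis $\beta_R$, and $U$ is nonempty, it contains at least one basic set $\sigma_r$, which in turn contains $1$. Hence $1 \in U$ for every nonempty open $U$. Consequently, if $U$ and $V$ are two nonempty open sets, then $1 \in U \cap V$, so $U \cap V \neq \varnothing$. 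This shows there do not exist two disjoint nonempty open sets, i.e.\ $\widetilde{M(R)}$ is hyperconnected.

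For $M(R) = (R^0, \tau_{R^0})$ I would run the identical argument, the only additional check being that the universal point survives the passage to $R^0$. Since the standing hypothesis $1_R \neq 0_R$ guarantees $1 \in R^0$, and $1 \in \sigma_k$ combined with $1 \neq 0$ gives $1 \in \sigma_k^0$ for every $k \in R^0$, the point $1$ again lies in every basic open set of the subspace, hence in every nonempty open set, and the same disjointness-fails conclusion follows.

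I do not anticipate a genuine obstacle here: the entire content is the observation that $1$ is a point common to all nonempty opens, which reduces hyperconnectedness to a one-line verification. The only subtlety worth flagging explicitly is the bookkeeping for the two distinct ambient spaces $R$ and $R^0$ and the role of the assumption $1_R \neq 0_R$ in ensuring $1$ remains available in the restricted space $M(R)$.
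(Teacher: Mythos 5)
Your proposal is correct and follows exactly the paper's argument: the theorem is stated immediately after the observation that $1\in\sigma_r$ for all $r\in R$, so every nonempty open set contains $1$ and no two nonempty open sets can be disjoint. Your extra check that $1\neq 0$ keeps the point $1$ available in $M(R)$ is a sensible bit of bookkeeping that the paper leaves implicit.
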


Since a hyperconnected space cannot be Hausdorff unless it contains only a single point and every hyperconnected space is both connected and locally connected and moreover extremally disconnected, we have the following corollary.

\begin{corollary}
$\widetilde{M(R)}$ and $M(R)$ are not Hausdorff, but are both connected and locally connected and extremally disconnected.
\end{corollary}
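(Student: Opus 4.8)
The plan is to derive every assertion from the hyperconnectedness established in the previous theorem, together with the observation that both $R$ and $R^0$ contain more than one point. Indeed, since $R$ is an integral domain that is not a field, it possesses a nonzero non-unit, so $R$ has at least three elements ($0$, $1$, and this non-unit) and $R^0$ at least two. Throughout I would use the equivalent characterizations of hyperconnectedness: a space $X$ is hyperconnected if and only if any two nonempty open sets meet, if and only if every nonempty open set is dense in $X$.

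First I would settle the non-Hausdorff claim by contradiction. If one of these spaces were Hausdorff, then for two distinct points $x,y$ there would exist disjoint open sets $U\ni x$ and $V\ni y$, contradicting the fact that nonempty open sets in a hyperconnected space cannot be disjoint. Since $R$ and $R^0$ each have at least two points, neither $\widetilde{M(R)}$ nor $M(R)$ is Hausdorff. Connectedness is the same observation phrased for a separation: a partition of the space into two disjoint nonempty open sets is precisely what hyperconnectedness forbids.

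For local connectedness, the key step is to note that every nonempty open subspace $U$ of a hyperconnected space is itself hyperconnected, since any two nonempty sets open in $U$ are open in the ambient space and hence meet. Thus every nonempty open set is connected, so the topology is itself a basis of connected open sets and the space is locally connected. For extremal disconnectedness, I would invoke the density characterization: every nonempty open set is dense, so its closure is the whole space, which is open, while the closure of $\emptyset$ is $\emptyset$, also open; hence the closure of every open set is open.

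None of these steps presents a genuine obstacle, as each is a standard consequence of hyperconnectedness. The only points requiring care are verifying that the underlying sets are not singletons, which rules out the degenerate case in which a hyperconnected space could be Hausdorff, and recording the elementary fact that open subspaces of hyperconnected spaces remain hyperconnected, which is what drives the local connectedness conclusion.
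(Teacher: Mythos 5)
Your proposal is correct and follows the same route as the paper: both derive all four properties directly from the hyperconnectedness theorem that immediately precedes the corollary, the paper by citing these standard consequences and you by supplying their short proofs. Your extra care about the underlying sets having more than one point (needed for the non-Hausdorff claim) is a worthwhile detail the paper leaves implicit.
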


\subsection{The Macías topology vs. the Golomb topology}\label{subsection2}

In \cite{clark2019note}, the topological space $\widetilde{G(R)}$ is defined, which is obtained by equipping $R$ with the topology generated by the collection of \textbf{coprime cosets} ${x+I}$ where $x\in R$ and $I$ is a nonzero ideal of $R$. The Golomb topological space $G(R)$ is obtained by equipping $R^0$ with the subspace topology of $\widetilde{G(R)}$.

In the case of $\mathbb{N}$, the Macías topology is coarser than the Golomb topology. Therefore, it is interesting to ask if this occurs in general over $R$. The answer to this latter question is affirmative.

\begin{theorem}
Every open set in $\widetilde{M(R)}$ (except for $\sigma_0=U(R)$) is open in $\widetilde{G(R)}$.
\end{theorem}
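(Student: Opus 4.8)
The plan is to reduce the statement to the basic open sets and then to realize each nonzero basic set $\sigma_r$ as a union of coprime cosets. First I would recall that every open set of $\widetilde{M(R)}$ is a union of members of $\beta_R=\{\sigma_r:r\in R\}$, and that the only basic set indexed by $r=0$ is $\sigma_0=U(R)$. By Theorem \ref{thsobreUnidades}(4) we have $U(R)\subseteq\sigma_r$ for every $r\in R$, so $\sigma_0$ is contained in every other basic set. Consequently, any open set $W\neq\sigma_0$ can be written as $W=\bigcup_{r\in S}\sigma_r$ with $0\notin S$ (the empty set being vacuously open, and $\sigma_0$ being absorbed whenever $S$ contains some nonzero index). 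Hence it suffices to prove that $\sigma_r$ is open in $\widetilde{G(R)}$ whenever $r\neq 0$.

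Next I would fix $r\neq 0$ and exhibit $\sigma_r$ as a union of coprime cosets. The natural ideal to use is $I:=\langle r\rangle$, which is nonzero precisely because $r\neq 0$; this is exactly where excluding $\sigma_0$ becomes necessary, since $\langle 0\rangle$ is not an admissible nonzero ideal in the Golomb basis of \cite{clark2019note}. For each $s\in\sigma_r$ I would consider the coset $s+\langle r\rangle$. It is a coprime coset because $s\in\sigma_r$ means $\langle s\rangle+\langle r\rangle=R$, which is the defining condition for $s+\langle r\rangle$ to be coprime.

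The key step is to verify the inclusion $s+\langle r\rangle\subseteq\sigma_r$, so that $\sigma_r=\bigcup_{s\in\sigma_r}(s+\langle r\rangle)$ is a union of basic open sets of $\widetilde{G(R)}$ and therefore open. Concretely, writing $t=s+ar$ for some $a\in R$ and starting from a Bézout relation $ur+vs=1$ (which exists since $s\in\sigma_r$), I would substitute $s=t-ar$ and rearrange to obtain $vt+(u-va)r=1$, witnessing $\langle t\rangle+\langle r\rangle=R$, i.e. $t\in\sigma_r$. Conceptually this is just the observation that membership in $\sigma_r$ depends only on the residue class modulo $\langle r\rangle$ — indeed $\sigma_r$ is the preimage under the projection $R\to R/\langle r\rangle$ of the units of $R/\langle r\rangle$ — so $\sigma_r$ is saturated with respect to $\langle r\rangle$ and splits into coprime cosets.

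I expect the only genuine subtlety to lie in the first paragraph: making sure that discarding $\sigma_0$ from an arbitrary union is legitimate (this is handled cleanly by Theorem \ref{thsobreUnidades}(4)) and that $W=\sigma_0$ is indeed the sole excluded case, reflecting the fact that $U(R)$ need not be open in $\widetilde{G(R)}$. The Bézout rearrangement in the last step is routine once the correct ideal $\langle r\rangle$ has been identified, and identifying that ideal is really the whole idea of the proof.
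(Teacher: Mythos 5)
Your proof is correct and follows essentially the same route as the paper: both arguments show that for $r\neq 0$ the set $\sigma_r$ is saturated modulo $\langle r\rangle$ and hence equals the union of the coprime cosets $s+\langle r\rangle$ over $s\in\sigma_r$, via the same Bézout rearrangement. Your opening reduction (absorbing $\sigma_0=U(R)$ into any other basic set via Theorem \ref{thsobreUnidades}(4) to pass from arbitrary open sets to basic ones) is a detail the paper leaves implicit, and it is handled correctly.
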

\begin{proof}
Let $\sigma_k\in \beta_R$. If $x\in\sigma_k$, then there exist $u,v\in R$ such that $ux+vr=1$. Let $r\in R$, then $1=u(x+kr)+(v-uk)r$. Thus, $x+rk\in\sigma_k$ for all $r\in R$. On the other hand, consider the element $r+nk$ where $\langle r\rangle + \langle k\rangle=R$ and $n\in R$. Since $r\in\sigma_k$, there exist $u,v\in R$ such that $ur+vk=1$. Then $u(r+nk)+(v-un)k=1$. Thus, $r+nk\in\sigma_k$. Therefore, for all $\sigma_k\in \beta_R$, we have:

\begin{equation*}
   \sigma_k= \bigcup_{\substack{r\in R\\ \langle r \rangle + \langle k \rangle =R}}r+\langle k \rangle .
\end{equation*}
From this last equality, the result follows.
\end{proof}

Note from this last result that every open set in $M(R)$ is open in $G(R)$. Of course, in $M(R)$ and $G(R)$ the units do not form an open set. Thus, we have the following corollary.

\begin{corollary}\label{corcompaacion}
    The Macías topology is strictly coarser than the Golomb topology.
\end{corollary}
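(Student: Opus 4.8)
The plan is to read off the inclusion of topologies directly from the preceding theorem and then separate them by exhibiting a single Golomb-basic set that fails to be Macías-open. The preceding theorem shows that each $\sigma_k$ (for $k\neq 0$) is a union of coprime cosets, so every member of the basis $\beta_{R^0}$ is open in $G(R)$; hence every $M(R)$-open set is $G(R)$-open and the Macías topology on $R^0$ is contained in the Golomb topology. This already gives ``coarser,'' so the whole content of the corollary is the word ``strictly,'' i.e. producing a $G(R)$-open set that is not $M(R)$-open.

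For strictness I would first isolate the obstruction to being Macías-open. By Theorem \ref{thsobreUnidades}(5) applied to $u=1$, the singleton $\{1\}$ is dense in $M(R)$; equivalently, $1\in\sigma_k^0$ for every $k\in R^0$, since $\langle 1\rangle+\langle k\rangle=R$. Consequently every nonempty open set of $M(R)$ contains the point $1$. It therefore suffices to produce a nonempty $G(R)$-open set that omits $1$.

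The witness I would use is built from the hypothesis that $R$ is not a field: fix a nonzero nonunit $p\in R$ and set $V=(1+p)+\langle p^2\rangle$. I would first check that $V$ is a genuine coprime coset, i.e. $\langle 1+p\rangle+\langle p^2\rangle=R$; this follows from the identity $1=(1-p)(1+p)+p^2$, which simultaneously exhibits $1-p$ as the inverse of $1+p$ modulo $p^2$. Hence $V$ is open in $\widetilde{G(R)}$, and intersecting with $R^0$ keeps it open in $G(R)$. I would then verify $0\notin V$ and $1\notin V$: each membership reduces, after cancelling $p$ (legitimate since $R$ is a domain and $p\neq 0$), to the assertion that $p$ is a unit, contrary to the choice of $p$. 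Thus $V$ is a nonempty $G(R)$-open set with $1\notin V$, so $V$ cannot be $M(R)$-open, and the inclusion of the two topologies is strict.

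The main obstacle is the strictness witness rather than the inclusion: one must find a coprime coset avoiding $1$ \emph{uniformly} over all non-field integral domains, including those with very few units or small residue rings (for instance $\mathbb{F}_2[x]$, where the only unit is $1$ itself, so one cannot simply point to a second unit). The squaring trick $I=\langle p^2\rangle$ is what makes the argument go through in every case, because $1+p$ is always a nontrivial unit modulo $p^2$; the only ring-theoretic facts used are cancellation in a domain and that $p$ is a nonunit.
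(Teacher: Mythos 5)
Your proof is correct. The containment half is exactly the paper's argument: you cite the preceding theorem's decomposition of each $\sigma_k$ into coprime cosets, so every $M(R)$-open set is $G(R)$-open. Where you genuinely diverge is on strictness. The paper disposes of this in one sentence (``in $M(R)$ and $G(R)$ the units do not form an open set''), which really only addresses the $\sigma_0$ caveat of the preceding theorem and never exhibits a $G(R)$-open set that fails to be $M(R)$-open; the strict inequality is left to the reader. You supply the missing witness: since $1\in\sigma_k^0$ for every $k\in R^0$, the point $1$ lies in every nonempty $M(R)$-open set, so it suffices to find a nonempty coprime coset avoiding $1$, and your choice $V=(1+p)+\langle p^2\rangle$ for a nonzero nonunit $p$ does this uniformly: the identity $(1-p)(1+p)+p^2=1$ shows $V$ is a coprime coset, and membership of $0$ or $1$ in $V$ would force $p$ to be a unit by cancellation in the domain. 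The squaring trick is a nice touch, since it works even in rings like $\mathbb{F}_2[x]$ where $1$ is the only unit and no ``second unit'' is available. In short, your argument buys an actual proof of strictness where the paper offers only an assertion, at the modest cost of a short computation with one auxiliary coset.
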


Now, in \cite{clark2019note} the following theorem is proven:

\begin{theorem}[Lemma 1]
For an integral domain $R$, the following are equivalent
\begin{enumerate}
    \item The space $\widetilde{G(R)}$ is indiscrete.
    \item The space $G(R)$ is indiscrete.
    \item The ring $R$ is a field.
\end{enumerate}
\end{theorem}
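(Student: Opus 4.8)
The plan is to prove the three-way equivalence by establishing the cyclic chain of implications $(1)\Rightarrow(2)\Rightarrow(3)\Rightarrow(1)$, exploiting the fact that $G(R)$ carries the subspace topology inherited from $\widetilde{G(R)}$ on $R^0$. First I would handle $(1)\Rightarrow(2)$, which should be essentially immediate: if $\widetilde{G(R)}$ is indiscrete, then its only open sets are $\emptyset$ and $R$, so the subspace topology on $R^0$ has only $\emptyset$ and $R^0$ as open sets, making $G(R)$ indiscrete. The only subtlety to check is that $R^0$ is not itself an unexpectedly small set, but since $R$ is not a field we have at least one nonzero non-unit, so $R^0$ is a genuine (nonempty, in fact infinite if $R$ is infinite) space on which indiscreteness is meaningful.

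The substantive direction is the contrapositive of $(2)\Rightarrow(3)$, or equivalently proving $(3)\Rightarrow(1)$ together with $\lnot(3)\Rightarrow\lnot(2)$. I would organize it as follows. For $(3)\Rightarrow(1)$: if $R$ is a field, then the only ideals of $R$ are $\langle 0\rangle$ and $R$ itself; since the defining basis of $\widetilde{G(R)}$ uses only \emph{nonzero} ideals $I$, the only nonzero ideal available is $I=R$, and the coprime coset $x+R=R$ for every $x$. Hence the only nonempty basic open set is $R$, forcing $\widetilde{G(R)}$ to be indiscrete. The contrapositive $\lnot(3)\Rightarrow\lnot(2)$ is where the real work lies: assuming $R$ is not a field, I must produce a proper nonempty open subset of $G(R)$ to show it is not indiscrete. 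The natural candidate is a coprime coset $x+I$ with $I$ a proper nonzero ideal (which exists precisely because $R$ has a nonzero non-unit, generating such an $I$), intersected with $R^0$; I would show this set is a proper nonempty open subset of $R^0$ by exhibiting an element of $R^0$ inside it and an element of $R^0$ outside it.

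The main obstacle I anticipate is the final step: verifying that the candidate open set $(x+I)\cap R^0$ is genuinely \emph{proper and nonempty} within $R^0$. Nonemptiness requires that the coprime coset $x+I$ contains a nonzero element, which should follow by choosing the coset representative carefully (for instance, taking $x$ to be a unit, so that $x\in x+I$ is automatically nonzero and the coset is coprime to $I$). Properness requires exhibiting some nonzero element \emph{not} in $x+I$; here I would argue that since $I$ is a proper ideal, the quotient $R/I$ has more than one element, so not every element of $R$ lies in the single coset $x+I$, and at least one such omitted element can be chosen nonzero. Once these two membership facts are secured, the chain closes and the equivalence follows. I would lean on the earlier comparison results only insofar as they confirm the structural relationship between $\widetilde{G(R)}$ and $G(R)$; the core of the argument is purely ideal-theoretic and hinges on the distinction between fields (whose only nonzero ideal is the whole ring) and non-fields (which possess proper nonzero ideals).
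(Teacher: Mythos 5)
Your argument is correct, but note that the paper itself offers no proof of this statement: it is quoted verbatim (as ``Lemma 1'') from the cited reference \cite{clark2019note}, so there is no in-paper proof to compare against. Your reconstruction is the standard one and all three implications go through: $(1)\Rightarrow(2)$ is immediate from the subspace topology; $(3)\Rightarrow(1)$ holds because a field has no proper nonzero ideal, so the only coprime coset is $R$ itself; and for $\lnot(3)\Rightarrow\lnot(2)$ your candidate works, most cleanly with $x=1$ and $I=\langle a\rangle$ for a nonzero non-unit $a$: the coset $1+I$ is coprime and contains the nonzero element $1$, while $a\in R^0$ lies outside it (otherwise $1\in I$), so $(1+I)\cap R^0$ is a nonempty proper open subset of $G(R)$. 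Your properness step via $|R/I|>1$ also closes, since the complement of $1+I$ contains the nonzero ideal $I$, which has nonzero elements; either way the gap you flagged as the ``main obstacle'' is easily filled.
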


Therefore, by virtue of Corollary \ref{corcompaacion}, we have the following theorem:

\begin{theorem}\label{thtrivial}
For an integral domain $R$, the following are equivalent
\begin{enumerate}
    \item The space $M(R)$ is indiscrete.
    \item The ring $R$ is a field.
\end{enumerate}
\end{theorem}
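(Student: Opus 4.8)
The plan is to prove the theorem as a short corollary of the two results stated immediately before it, namely the already-established Lemma~1 (the equivalence ``$G(R)$ indiscrete $\iff$ $R$ is a field'') and Corollary~\ref{corcompaacion} (the Mac\'ias topology on $R^0$ is strictly coarser than the Golomb topology on $R^0$). Since only two conditions need to be shown equivalent, I would prove the two implications $(2)\Rightarrow(1)$ and $(1)\Rightarrow(2)$ separately.

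For the direction $(2)\Rightarrow(1)$, suppose $R$ is a field. By Lemma~1 (the cited result from \cite{clark2019note}), the Golomb space $G(R)$ is then indiscrete, i.e.\ its only open sets are $\emptyset$ and $R^0$. Since every open set of $M(R)$ is open in $G(R)$ (this is exactly the content of the comparison theorem, restated in Corollary~\ref{corcompaacion}), the topology $\tau_{R^0}$ is contained in the Golomb topology, which has only the two trivial open sets. Hence $\tau_{R^0}$ also consists of at most $\emptyset$ and $R^0$, and since any topology contains both of these, $M(R)$ is indiscrete.

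For the direction $(1)\Rightarrow(2)$, I would argue by contrapositive: assume $R$ is \emph{not} a field and show $M(R)$ is not indiscrete, i.e.\ exhibit a basic open set $\sigma_k^0$ that is a proper nonempty subset of $R^0$. The natural candidate is a non-unit non-zero element. Since $R$ is an integral domain that is not a field, there exists $k\in R^0$ with $k\notin U(R)$. Then $\sigma_k^0$ is a nonempty open set (it contains $1$, since $\langle 1\rangle+\langle k\rangle=R$). To see it is proper, note that $k\notin\sigma_k^0$: indeed, if $k\in\sigma_k$ then $\langle k\rangle+\langle k\rangle=\langle k\rangle=R$, forcing $k\in U(R)$, a contradiction. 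Thus $\sigma_k^0\neq R^0$, so $M(R)$ has a nontrivial open set and is not indiscrete.

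The main thing to verify carefully is that this last step is genuinely independent of the quoted machinery, so that the equivalence is not circular: the implication ``not a field $\Rightarrow$ not indiscrete'' follows directly from the definition of $\sigma_k$ and requires only the existence of a nonzero non-unit, which is precisely the failure of the field condition. I do not anticipate a serious obstacle here, since the theorem is essentially a transfer of Lemma~1 across the topology-comparison inequality; the only point demanding attention is keeping straight the distinction between $\widetilde{M(R)}$ on $R$ and $M(R)$ on $R^0$, and confirming that the comparison of Corollary~\ref{corcompaacion} is applied on the correct underlying set $R^0$.
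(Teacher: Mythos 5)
Your proposal is correct, and the direction $(2)\Rightarrow(1)$ is exactly the paper's argument: the paper derives the theorem ``by virtue of Corollary~\ref{corcompaacion}'' together with the quoted Lemma~1 of \cite{clark2019note}, i.e.\ a field gives an indiscrete $G(R)$, and a topology coarser than the indiscrete one is indiscrete. Where you genuinely add something is the direction $(1)\Rightarrow(2)$: the coarseness inequality $\tau_{M}\subset\tau_{G}$ transfers indiscreteness \emph{downward} but not upward, so the paper's one-line appeal to Corollary~\ref{corcompaacion} does not by itself settle this implication, and the paper leaves the argument implicit. Your contrapositive fills that in cleanly and elementarily: for a nonzero non-unit $k$, the basic open set $\sigma_k^0$ contains $1$ (since $\langle 1\rangle+\langle k\rangle=R$) but not $k$ (since $\langle k\rangle+\langle k\rangle=\langle k\rangle\neq R$), and $1\neq k$ because $k$ is not a unit, so $\sigma_k^0$ is a nonempty proper open subset of $R^0$. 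This half uses only the definition of $\sigma_k$ and is independent of the Golomb machinery, so there is no circularity; your version is in that respect more self-contained and more careful than the paper's.
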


Note that $M(R)$ can be indiscrete, but $\widetilde{M(R)}$ (for $R$ an integral domain) cannot. Consider the following example.

\begin{example}
Consider the Boolean Field $R=\mathbb{Z}/2\mathbb{Z}=\{\overline{0},\overline{1}\}$, then $\widetilde{M(R)}$ is not the indiscrete space. Indeed, $\beta_R=\{\{1\},R\}$.
\end{example}

Finally, recall that if $R$ is a domain that is not a field, then $R$ is infinite and every nonzero ideal $I$ of $R$ is infinite. Now, since $R$ is an integral domain, the map $R \to I$ defined by $r \mapsto rx$ ($x\in I^0$) is an injection. Therefore, the non-empty open sets in $G(R)$ are infinite, and hence, the non-empty open sets in $M(R)$ are infinite. Another way to see that the non-empty open sets in $M(R)$ are infinite is the following: since $1 \in \sigma_k$ for all $k \in R$, then $1+rk \in \sigma_k$ for all $r,k \in R$. Since $R$ is an integral domain, if $r_1 \neq r_2$, then $1+r_1k \neq 1+r_2k$. Therefore, $\sigma_k$ is infinite for all $k \in R$. This last property is crucial for obtaining a \textit{topological proof of the infinitude of primes} over PIDs.


\section{The Macías topology over principal ideal domains}\label{section3}
From this point on, unless otherwise indicated or an additional condition is assumed, we will suppose that $R$ is an infinite PID (not a field).

\begin{theorem}
    Let $p\in R$ be irreducible (prime). Then $\textbf{cl}_{\widetilde{M(R)}}(\{p\})= \langle p\rangle$.
\end{theorem}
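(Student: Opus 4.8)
The plan is to work directly from the neighborhood characterization of closure: since $\beta_R$ is a basis, an element $x\in R$ belongs to $\textbf{cl}_{\widetilde{M(R)}}(\{p\})$ if and only if every basic open set $\sigma_r$ containing $x$ also contains $p$. Equivalently, $x\in\textbf{cl}_{\widetilde{M(R)}}(\{p\})$ precisely when, for all $r\in R$, the implication $x\in\sigma_r\Rightarrow p\in\sigma_r$ holds. The whole proof then reduces to translating the membership condition $s\in\sigma_r$, namely $\langle r\rangle+\langle s\rangle=R$, into divisibility statements, using that $R$ is a PID and that $p$ is prime.

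First I would establish the inclusion $\langle p\rangle\subseteq\textbf{cl}_{\widetilde{M(R)}}(\{p\})$. Take $x\in\langle p\rangle$, so $\langle x\rangle\subseteq\langle p\rangle$. If $x\in\sigma_r$, then $R=\langle x\rangle+\langle r\rangle\subseteq\langle p\rangle+\langle r\rangle\subseteq R$, forcing $\langle p\rangle+\langle r\rangle=R$, that is, $p\in\sigma_r$. By the characterization above, $x\in\textbf{cl}_{\widetilde{M(R)}}(\{p\})$. Note that this step uses only the ideal inclusion $\langle x\rangle\subseteq\langle p\rangle$ and requires neither primality of $p$ nor the PID hypothesis.

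For the reverse inclusion $\textbf{cl}_{\widetilde{M(R)}}(\{p\})\subseteq\langle p\rangle$, I would argue contrapositively: given $x\notin\langle p\rangle$, I must exhibit a single basic open set separating $x$ from $p$. The natural candidate is $\sigma_p$. On one hand, $p\notin\sigma_p$, since $\langle p\rangle+\langle p\rangle=\langle p\rangle\neq R$. On the other hand, because $p$ is irreducible and $p\nmid x$, every common divisor of $x$ and $p$ must be a unit (otherwise it would be an associate of $p$, forcing $p\mid x$, a contradiction); hence $\langle x\rangle+\langle p\rangle=R$, so $x\in\sigma_p$. Thus $\sigma_p$ is an open neighborhood of $x$ missing $p$, whence $x\notin\textbf{cl}_{\widetilde{M(R)}}(\{p\})$.

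I expect the main obstacle to be bookkeeping rather than conceptual: the delicate point is the verification that $p\nmid x$ yields $\langle x\rangle+\langle p\rangle=R$, which is exactly where the irreducibility of $p$ together with the PID structure (every ideal is principal, so $\langle x\rangle+\langle p\rangle=\langle\gcd(x,p)\rangle$) becomes essential. Everything else follows from the elementary lattice inclusion of ideals. Combining the two inclusions yields $\textbf{cl}_{\widetilde{M(R)}}(\{p\})=\langle p\rangle$.
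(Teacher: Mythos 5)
Your proposal is correct and follows essentially the same route as the paper: both directions hinge on the same two observations, namely that $x\in\langle p\rangle$ forces every $\sigma_r$ containing $x$ to contain $p$, and that for $x\notin\langle p\rangle$ the set $\sigma_p$ is a basic neighborhood of $x$ avoiding $p$. The only cosmetic difference is that you justify $\langle x\rangle+\langle p\rangle=R$ via the gcd being a unit, while the paper invokes the maximality of $\langle p\rangle$ and invertibility in the field $R/\langle p\rangle$; these are interchangeable in a PID.
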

\begin{proof}
Let $x\in \textbf{cl}_{\widetilde{M(R)}}(\{p\})$. Suppose $x\notin \langle p \rangle$. Since $R$ is a PID, $\langle p \rangle$ is maximal in $R$. Thus, $R/\langle p \rangle$ is a field, and hence the coset $x+\langle p \rangle$ is invertible. Thus, there exist $u,v\in R$ such that $ux+vp=1$, implying $x\in\sigma_p$. This implies $p\in\sigma_p$, which is absurd since $p$ should be a unit in this case. On the other hand, let $x\in \langle p\rangle$. Then $x=rp$ for some $r\in R$. Let $\sigma_k\in\beta_R$ such that $x\in \sigma_k$. Thus, there exist $u,v\in R$ such that $1=uk+v(rp)=uk+(vr)p$, implying $p\in\sigma_k$. Therefore, $x\in  \textbf{cl}_{M(R)}(\{p\})$. 
\end{proof}
\begin{corollary}
     Let $p\in R$ be irreducible (prime). Then $\textbf{cl}_{M(R)}(\{p\})= \langle p\rangle^0$.
\end{corollary}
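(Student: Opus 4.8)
The plan is to derive this corollary directly from the theorem immediately preceding it, which states that $\textbf{cl}_{\widetilde{M(R)}}(\{p\})=\langle p\rangle$ for an irreducible (prime) element $p$. Since $M(R)$ is the subspace of $\widetilde{M(R)}$ obtained by restricting to $R^0=R\setminus\{0\}$, the natural approach is to use the general topological fact that for a subspace $Y\subseteq X$ and a point $y\in Y$, one has $\textbf{cl}_Y(\{y\})=\textbf{cl}_X(\{y\})\cap Y$. Applying this with $X=\widetilde{M(R)}$, $Y=R^0$, and $y=p$ (noting $p\neq 0$ since $p$ is irreducible, hence a nonzero nonunit) yields
\begin{equation*}
    \textbf{cl}_{M(R)}(\{p\})=\textbf{cl}_{\widetilde{M(R)}}(\{p\})\cap R^0=\langle p\rangle\cap R^0=\langle p\rangle^0.
\end{equation*}

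First I would recall or cite the subspace-closure identity, which holds because the closed sets of $Y$ are exactly the intersections of closed sets of $X$ with $Y$, so the smallest closed subset of $Y$ containing $\{y\}$ is the trace on $Y$ of the smallest closed subset of $X$ containing $\{y\}$. Then I would invoke the preceding theorem to substitute $\langle p\rangle$ for $\textbf{cl}_{\widetilde{M(R)}}(\{p\})$, and finally observe that intersecting $\langle p\rangle$ with $R^0$ simply removes the element $0$, giving $\langle p\rangle^0$ by the notational convention $X^0=X\setminus\{0\}$ established earlier in the excerpt.

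I do not anticipate a genuine obstacle here, since the corollary is an essentially formal consequence of the theorem. The only point requiring a small check is that the subspace-closure formula is being applied correctly: one must confirm that $p\in R^0$, which is immediate, and that no subtlety arises from $0\in\langle p\rangle$ being excluded. In fact the preceding theorem's own proof already verifies the inclusion $\langle p\rangle\subseteq\textbf{cl}_{M(R)}(\{p\})$ on the level of $M(R)$ directly (its closing line concludes $x\in\textbf{cl}_{M(R)}(\{p\})$), so the corollary could alternatively be phrased as simply restricting the reverse inclusion; but the cleanest and most self-contained route is the subspace-closure identity, which I would present as the main line of the argument.
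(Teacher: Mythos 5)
Your main line of argument rests on the claim that $M(R)$ is the subspace of $\widetilde{M(R)}$ obtained by restricting to $R^0$, and this is not how the paper defines $M(R)$. The topology $\tau_{R^0}$ is generated by $\beta_{R^0}=\{\sigma_k^0:k\in R^0\}$, which deliberately omits $\sigma_0^0=U(R)$; the author states explicitly that this omission is made precisely so that $U(R)$ is \emph{not} open in $M(R)$, whereas $U(R)=\sigma_0\cap R^0$ \emph{is} open in the subspace topology inherited from $\widetilde{M(R)}$. Since the two topologies genuinely differ (e.g.\ for $R=\mathbb{Z}$, where $U(R)$ is finite while every nonempty open set of $M(R)$ is infinite), the identity $\textbf{cl}_{M(R)}(\{p\})=\textbf{cl}_{\widetilde{M(R)}}(\{p\})\cap R^0$ cannot be invoked as a formal consequence of the subspace-closure formula. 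Contrast this with the Golomb space $G(R)$, which the paper \emph{does} define as a subspace of $\widetilde{G(R)}$; the asymmetry is easy to miss but is essential here.

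The corollary is nonetheless true, and the repair is short. Because $\tau_{R^0}$ is coarser than the subspace topology, closures can only grow, which gives $\langle p\rangle^0=\langle p\rangle\cap R^0\subseteq\textbf{cl}_{M(R)}(\{p\})$ for free (alternatively, rerun the second half of the theorem's proof: if $x=rp\in\sigma_k^0$ with $k\in R^0$, then $1=uk+(vr)p$ forces $p\in\sigma_k^0$). For the reverse inclusion one must check that the separating open set used in the theorem's proof survives in the coarser topology: if $x\in R^0\setminus\langle p\rangle$, then $x\in\sigma_p$, and since $p\in R^0$ the set $\sigma_p^0$ is a basic open set of $M(R)$ containing $x$ but not $p$ (as $p$ is not a unit). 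This is the step that actually needs the hypothesis $p\neq 0$, and it is the reason the paper can state the corollary without further proof: both halves of the theorem's argument go through verbatim with $\sigma_k$ replaced by $\sigma_k^0$. Your closing remark gestures at exactly this route, but you chose the subspace identity as the main argument, and that is where the gap lies.
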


\begin{corollary}\label{corclausura1}
    Let $r\in R^0-U(R)$. We can write $r =p_1{}^{m_1}p_2{}^{m_2}...p_n{}^{m_n}$  where $p_i$ is irreducible (prime) such that $\langle p_i\rangle+\langle p_j\rangle = R$ for $i\neq j$. Then 
    $$\textbf{cl}_{\widetilde{M(R)}}(\{r\})=\bigcap_{i}\langle p_i \rangle$$
\end{corollary}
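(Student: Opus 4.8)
The plan is to characterize membership in the closure through basic open sets and then translate everything into divisibility. Recall that $x\in\textbf{cl}_{\widetilde{M(R)}}(\{r\})$ means every basic neighbourhood $\sigma_k$ of $x$ also contains $r$; since the defining relation $\langle x\rangle+\langle k\rangle=R$ is symmetric in its two arguments, we have $x\in\sigma_k\iff k\in\sigma_x$, so this condition is equivalent to the inclusion $\sigma_x\subseteq\sigma_r$. Thus I would first establish the clean reformulation
\[
x\in\textbf{cl}_{\widetilde{M(R)}}(\{r\})\iff\sigma_x\subseteq\sigma_r ,
\]
valid for all $x,r\in R$, and then compute both sides from the factorization of $r$.

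Next I would reduce $\sigma_r$ to the primes dividing $r$. Using the identity $\sigma_{r_1r_2}=\sigma_{r_1}\cap\sigma_{r_2}$ proved in the first theorem, together with its immediate consequence $\sigma_{p_i^{m_i}}=\sigma_{p_i}$, one obtains $\sigma_r=\bigcap_i\sigma_{p_i}$. Hence $\sigma_x\subseteq\sigma_r$ holds if and only if $\sigma_x\subseteq\sigma_{p_i}$ for every $i$, and the whole statement collapses to the single per-prime equivalence
\[
\sigma_x\subseteq\sigma_{p_i}\iff x\in\langle p_i\rangle ,
\]
intersected over $i$. This is precisely where the PID hypothesis enters: because $p_i$ is irreducible, $\langle p_i\rangle$ is maximal, whence $x\in\sigma_{p_i}\iff x\notin\langle p_i\rangle$.

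For this equivalence I would argue both directions directly, mirroring the style of the preceding theorem. If $x\in\langle p_i\rangle$, then $\langle x\rangle\subseteq\langle p_i\rangle$, so any $k$ with $\langle x\rangle+\langle k\rangle=R$ forces $\langle p_i\rangle+\langle k\rangle=R$, giving $\sigma_x\subseteq\sigma_{p_i}$. Conversely, if $x\notin\langle p_i\rangle$, the element $k=p_i$ acts as a separator: maximality of $\langle p_i\rangle$ yields $p_i\in\sigma_x$, while $p_i\notin\sigma_{p_i}$, so $\sigma_x\not\subseteq\sigma_{p_i}$. Combining the steps, $x\in\textbf{cl}_{\widetilde{M(R)}}(\{r\})\iff x\in\langle p_i\rangle$ for all $i\iff x\in\bigcap_i\langle p_i\rangle$.

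The argument is almost entirely formal once the reformulation $x\in\textbf{cl}_{\widetilde{M(R)}}(\{r\})\iff\sigma_x\subseteq\sigma_r$ is in hand; indeed it may be packaged even more compactly as $\textbf{cl}_{\widetilde{M(R)}}(\{r\})=\bigcap_i\textbf{cl}_{\widetilde{M(R)}}(\{p_i\})=\bigcap_i\langle p_i\rangle$, invoking the preceding theorem and thereby generalizing the identity $\textbf{Cl}(\{nm\})=\textbf{Cl}(\{n\})\cap\textbf{Cl}(\{m\})$ recorded for $\mathbb{N}$ in the introduction. I expect the only genuinely load-bearing steps to be the reduction $\sigma_r=\bigcap_i\sigma_{p_i}$ and the appeal to maximality of $\langle p_i\rangle$; the pairwise comaximality of the $p_i$ plays no role beyond guaranteeing that the factorization is into distinct prime ideals.
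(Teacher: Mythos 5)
Your proof is correct and follows essentially the same route as the paper's: both arguments reduce the closure of $\{r\}$ to the closures of its prime factors via the multiplicativity $\sigma_{r_1r_2}=\sigma_{r_1}\cap\sigma_{r_2}$ and then invoke the maximality of $\langle p_i\rangle$ (i.e.\ the preceding theorem $\textbf{cl}_{\widetilde{M(R)}}(\{p\})=\langle p\rangle$). Your reformulation $x\in\textbf{cl}_{\widetilde{M(R)}}(\{r\})\iff\sigma_x\subseteq\sigma_r$ is just a cleaner packaging of the paper's pointwise argument over basic neighbourhoods, and your closing observation about the comaximality hypothesis being inessential is accurate.
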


\begin{proof}
Let $x\in \bigcap_{i}\langle p_i \rangle$. Then, by the previous theorem, $x\in \textbf{cl}_{\widetilde{M(R)}}(\{p_i\})$ for every $i$. Thus, for any basic element $\sigma_k\in \beta_R$ such that $x\in\sigma_k$, $p_i\in\sigma_k$ for every $i$. Hence, $r=p_1{}^{m_1}p_2{}^{m_2}...p_n{}^{m_n}\in\sigma_k$. Therefore, $x\in \textbf{cl}_{\widetilde{M(R)}}(\{r\})$.  On the other hand, let $x\in \textbf{cl}_{\widetilde{M(R)}}(\{r\})$. Then, for every basic element $\sigma_k\in\beta_{R}$ such that $x\in\sigma_k$, $r=p_1{}^{m_1}p_2{}^{m_2}...p_n{}^{m_n}\in\sigma_k$, from which it follows that $p_i\in\sigma_k$ for every $i$. Thus, $x\in\textbf{cl}_{\widetilde{M(R)}}(\{p_i\})$ for every $i$, and by the previous theorem, $x\in \langle p_i \rangle$ for every $i$. Therefore, $\textbf{cl}_{\widetilde{M(R)}}(\{r\})=\bigcap_{i}\langle p_i\rangle$.
\end{proof}

\begin{corollary}\label{corclausura2}
    Let $r\in R^0-U(R)$. We can write $r =p_1{}^{m_1}p_2{}^{m_2}...p_n{}^{m_n}$  where $p_i$ is irreducible (prime) such that $\langle p_i\rangle+\langle p_j\rangle = R$ for $i\neq j$. Then 
    $$\textbf{cl}_{M(R)}(\{r\})=\bigcap_{i}\langle p_i \rangle ^0$$
\end{corollary}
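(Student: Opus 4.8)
The plan is to derive Corollary \ref{corclausura2} from Corollary \ref{corclausura1} in essentially the same way the corollaries to the preceding theorem were obtained: by relating the closure in the subspace $M(R)=(R^0,\tau_{R^0})$ to the closure in the ambient space $\widetilde{M(R)}=(R,\tau_R)$ and then deleting the point $0$. The key structural fact is that $M(R)$ is a subspace of $\widetilde{M(R)}$ whose underlying set is $R^0=R\setminus\{0\}$, and the basis of $M(R)$ is $\{\sigma_k^0:k\in R^0\}=\{\sigma_k\cap R^0:k\in R^0\}$. Since closure in a subspace is the intersection of the ambient closure with the subspace, the natural identity to invoke is
\begin{equation*}
\textbf{cl}_{M(R)}(\{r\})=\textbf{cl}_{\widetilde{M(R)}}(\{r\})\cap R^0.
\end{equation*}

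First I would confirm this subspace-closure identity applies here. Because $r\in R^0-U(R)$, the point whose closure we compute genuinely lies in the subspace $R^0$, so the general topological formula $\textbf{cl}_{Y}(A)=\textbf{cl}_{X}(A)\cap Y$ for $A\subseteq Y\subseteq X$ is available with $X=R$, $Y=R^0$, and $A=\{r\}$. Applying Corollary \ref{corclausura1} to rewrite the ambient closure, this gives
\begin{equation*}
\textbf{cl}_{M(R)}(\{r\})=\Bigl(\bigcap_{i}\langle p_i\rangle\Bigr)\cap R^0=\bigcap_{i}\bigl(\langle p_i\rangle\cap R^0\bigr)=\bigcap_{i}\langle p_i\rangle^{0},
\end{equation*}
using that intersecting with $R^0$ commutes with the finite intersection over $i$ and that $\langle p_i\rangle\cap R^0=\langle p_i\rangle^0$ by the definition $X^0=X\setminus\{0\}$. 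This is exactly the claimed formula.

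Alternatively, if one prefers to avoid quoting the subspace-closure identity as a black box, I would argue directly with basic open sets, mirroring the proof of Corollary \ref{corclausura1}. For one containment, take $x\in\bigcap_i\langle p_i\rangle^0$; then $x\in R^0$ and $x\in\langle p_i\rangle$ for each $i$, so any basic set $\sigma_k^0\in\beta_{R^0}$ containing $x$ satisfies $x\in\sigma_k$, whence $p_i\in\sigma_k$ for all $i$ and therefore $r\in\sigma_k$; since $r\neq 0$ we get $r\in\sigma_k^0$, placing $x$ in $\textbf{cl}_{M(R)}(\{r\})$. For the reverse containment, if $x\in\textbf{cl}_{M(R)}(\{r\})$ then in particular $x\in R^0$, and the same relative-basis argument forces $x\in\langle p_i\rangle$ for each $i$, hence $x\in\bigcap_i\langle p_i\rangle^0$.

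I do not anticipate a genuine obstacle here: the statement is a routine transfer of Corollary \ref{corclausura1} to the subspace, and the only point requiring a moment of care is the bookkeeping around the element $0$ — specifically checking that $r\neq 0$ (guaranteed since $r\in R^0$) so that $r\in\sigma_k$ upgrades to $r\in\sigma_k^0$, and that $0$ is correctly excluded on both sides so the intersection with $R^0$ distributes cleanly over the finite family $\{\langle p_i\rangle\}_i$. The substantive content lives entirely in the ambient computation already established in Corollary \ref{corclausura1}.
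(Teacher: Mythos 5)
Your second, direct argument with the basic sets $\sigma_k^0$ is correct and is essentially what the paper intends: Corollary \ref{corclausura2} is stated there without proof, as the restriction of Corollary \ref{corclausura1} to $R^0$, and your basis-level verification (using maximality of $\langle p_i\rangle$ for the reverse inclusion and the product stability of $\sigma_k$ for the forward one) fills that in correctly.

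Your primary route, however, rests on a premise the paper explicitly arranges to be false: $M(R)$ is \emph{not} the subspace of $\widetilde{M(R)}$. The subspace topology on $R^0$ induced by $\tau_R$ has basis $\{\sigma_r\cap R^0: r\in R\}$, which includes $\sigma_0\cap R^0=U(R)$; the paper deliberately defines $\tau_{R^0}$ from $\beta_{R^0}=\{\sigma_k^0:k\in R^0\}$ precisely so that $U(R)$ need not be open. Hence $\tau_{R^0}$ is in general strictly coarser than the subspace topology, and the identity $\textbf{cl}_{M(R)}(\{r\})=\textbf{cl}_{\widetilde{M(R)}}(\{r\})\cap R^0$ cannot simply be quoted. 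It does still hold for a non-unit $r$: coarseness gives $\textbf{cl}_{M(R)}(\{r\})\supseteq\textbf{cl}_{\widetilde{M(R)}}(\{r\})\cap R^0$ for free, and for the other inclusion one checks that the only missing basic set, $U(R)$, contains no point of $\textbf{cl}_{M(R)}(\{r\})$ (a unit $u$ lies in every $\sigma_k^0$ with $k\in R^0$, so $u\in\textbf{cl}_{M(R)}(\{r\})$ would force $r\in\sigma_r$, making $r$ a unit). So either add that one-line justification or lead with your direct basis argument, which needs no repair.
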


\begin{remark}
Note that if $x,y\in\sigma_k$, then $xy\in\sigma_k$. Of course, if $x,y\in\sigma_k$, there exist $u_x,v_x,u_y,v_y\in R$ such that $1=u_xk+v_xx=u_yk+v_yy$. Thus, $(u_xku_y+u_xv_yy+v_xxu_y)k+(v_xv_y)xy=1$, implying $xy\in \sigma_k$.
\end{remark}

\begin{corollary}\label{corclausuraproducto1}
Let $x,y$ be any elements in $R$. Then
    $$\textbf{cl}_{\widetilde{M(R)}}(\{xy\}) = \textbf{cl}_{\widetilde{M(R)}}(\{x\}) \cap \textbf{cl}_{\widetilde{M(R)}}(\{y\}).$$
\end{corollary}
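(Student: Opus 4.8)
The plan is to work directly from the basic-neighbourhood description of closure in $\widetilde{M(R)}$. Since $\beta_R$ is a basis, for any $a\in R$ one has $z\in\textbf{cl}_{\widetilde{M(R)}}(\{a\})$ if and only if every basic open set $\sigma_k$ containing $z$ also contains $a$; equivalently, $z\in\sigma_k$ implies $a\in\sigma_k$ for all $k\in R$. Everything will then reduce to a single membership identity relating $xy$ to $x$ and $y$ inside an arbitrary $\sigma_k$.

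The key step is to establish, for every $k\in R$, the equivalence
\begin{equation*}
xy\in\sigma_k \iff x\in\sigma_k \ \text{ and } \ y\in\sigma_k.
\end{equation*}
I would obtain this by combining two facts already available. First, the defining relation is symmetric: $a\in\sigma_b$ holds exactly when $\langle a\rangle+\langle b\rangle=R$, which is visibly symmetric in $a$ and $b$, so $a\in\sigma_b\iff b\in\sigma_a$. Second, the opening theorem gives $\sigma_{xy}=\sigma_x\cap\sigma_y$. Reading $xy\in\sigma_k$ as $k\in\sigma_{xy}$ by symmetry, then applying $\sigma_{xy}=\sigma_x\cap\sigma_y$, and finally translating $k\in\sigma_x$ and $k\in\sigma_y$ back to $x\in\sigma_k$ and $y\in\sigma_k$ by symmetry again, yields the displayed equivalence. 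Alternatively, the forward direction is immediate from $\sigma_{xy}\subseteq\sigma_x$ and $\sigma_{xy}\subseteq\sigma_y$, while the backward direction is exactly the Remark above, namely that $x,y\in\sigma_k$ forces $xy\in\sigma_k$.

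With this identity in hand the result follows by a double inclusion. For $\supseteq$, if $z$ lies in both $\textbf{cl}_{\widetilde{M(R)}}(\{x\})$ and $\textbf{cl}_{\widetilde{M(R)}}(\{y\})$, then any $\sigma_k\ni z$ contains both $x$ and $y$, hence contains $xy$ by the identity, so $z\in\textbf{cl}_{\widetilde{M(R)}}(\{xy\})$. For $\subseteq$, if $z\in\textbf{cl}_{\widetilde{M(R)}}(\{xy\})$ then every $\sigma_k\ni z$ contains $xy$, and the identity forces $x\in\sigma_k$ and $y\in\sigma_k$ separately, placing $z$ in each of $\textbf{cl}_{\widetilde{M(R)}}(\{x\})$ and $\textbf{cl}_{\widetilde{M(R)}}(\{y\})$. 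In fact the two conditions $z\in\sigma_k\Rightarrow(x\in\sigma_k\wedge y\in\sigma_k)$ and $(z\in\sigma_k\Rightarrow x\in\sigma_k)\wedge(z\in\sigma_k\Rightarrow y\in\sigma_k)$ are logically identical for each $k$, so the equality of the two closures is really a single restatement.

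I do not expect a serious obstacle here; the only point requiring genuine care is the symmetry bridge that converts the opening theorem's statement about the set $\sigma_{xy}$ (whose subscript is the product) into a statement about membership of the element $xy$ in an arbitrary $\sigma_k$. I would also emphasize that the argument is entirely uniform in $x$ and $y$: no case split according to whether $x$, $y$, or $xy$ is zero or a unit is needed, since every step passes through the membership equivalence, which holds for every pair of elements of $R$.
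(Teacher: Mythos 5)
Your proof is correct, and it takes a genuinely different route from the paper's. The paper proves this corollary in Section \ref{section3} under the standing hypothesis that $R$ is a PID: it splits off the cases where $x$ or $y$ is zero or a unit (handled by Theorem \ref{thsobreUnidades}), writes the remaining $x$ and $y$ in their prime factorizations, and invokes Corollary \ref{corclausura1} to identify each closure with an intersection of the prime ideals $\langle p_i\rangle$, so that the equality reduces to a set-theoretic identity among those ideals. You instead work purely at the level of the basis: the symmetry $a\in\sigma_b\iff b\in\sigma_a$ (immediate from $\langle a\rangle+\langle b\rangle=R$ being symmetric) converts the opening theorem's identity $\sigma_{xy}=\sigma_x\cap\sigma_y$ into the membership equivalence $xy\in\sigma_k\iff(x\in\sigma_k\wedge y\in\sigma_k)$, and the closure equality is then a direct logical restatement. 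Your key equivalence is sound in both directions --- the forward implication is the computation $uk+v(xy)=uk+(vy)x=1$, and the backward one is exactly the paper's Remark. What your approach buys is significant: it needs no factorization into irreducibles, no appeal to maximality of $\langle p\rangle$, and no case split for units or zero, so it establishes the corollary for an arbitrary integral domain (indeed for any commutative ring with identity), answering in passing part of what the paper's later Question about non-PIDs is circling around; the paper's approach, by contrast, yields the more refined structural description $\textbf{cl}_{\widetilde{M(R)}}(\{r\})=\bigcap_i\langle p_i\rangle$ as a byproduct, which is used elsewhere (e.g., in the density argument for $\mathcal{P}$). One small point of care: your closure criterion ``$z\in\textbf{cl}_{\widetilde{M(R)}}(\{a\})$ iff every \emph{basic} open set containing $z$ contains $a$'' is valid precisely because $\beta_R$ is a basis, and you use it correctly.
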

\begin{proof}
If $x$ or $y$ is a unit or $x=0$ or $y=0$, the result is clear, see Theorem \ref{thsobreUnidades}. Thus, suppose neither $x$ nor $y$ are units and neither are 0. Then we can have $x=p_1{}^{a_1}\cdots p_r{}^{a_r}$ and  $y=q_1{}^{b_1}\cdots q_s{}^{b_s} $ written in their prime decompositions. Then, by Corollary \ref{corclausura1},
\begin{equation*}
    \textbf{cl}_{\widetilde{M(R)}}(\{x\})\cap \textbf{cl}_{\widetilde{M(R)}}(\{x\})=\bigcap_i\langle p_i\rangle\cap\bigcap_j\langle q_j\rangle= \textbf{cl}_{\widetilde{M(R)}}(\{xy\}).
\end{equation*}
The proof is complete.
\end{proof}

Similarly, we prove the following corollary.

\begin{corollary}
Let $x,y$ be any elements in $R^0$. Then
    $$\textbf{cl}_{M(R)}(\{xy\}) = \textbf{cl}_{M(R)}(\{x\}) \cap \textbf{cl}_{M(R)}(\{y\}).$$
\end{corollary}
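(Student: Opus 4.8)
The plan is to mirror the proof of Corollary \ref{corclausuraproducto1}, but I would first record a cleaner reformulation that avoids prime decompositions altogether and works directly on $R^0$. The starting observation is that for $z,w\in R^0$ one has $z\in\textbf{cl}_{M(R)}(\{w\})$ if and only if every basic open set $\sigma_k^0$ (with $k\in R^0$) containing $z$ also contains $w$; since $z,w\neq 0$, membership in $\sigma_k^0$ coincides with membership in $\sigma_k$, so this condition reads: for every $k\in R^0$ with $z\in\sigma_k$ we have $w\in\sigma_k$.

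The key input is already available from the proof that $\widetilde{M(R)}$ is a topological semigroup, where it was shown that $\langle k\rangle+\langle xy\rangle=R$ holds if and only if both $\langle k\rangle+\langle x\rangle=R$ and $\langle k\rangle+\langle y\rangle=R$; equivalently, $xy\in\sigma_k$ if and only if $x\in\sigma_k$ and $y\in\sigma_k$. I would now chain these two facts: $z\in\textbf{cl}_{M(R)}(\{xy\})$ iff for every $k\in R^0$ with $z\in\sigma_k$ one has $xy\in\sigma_k$, iff for every such $k$ one has $x\in\sigma_k$ \emph{and} $y\in\sigma_k$. The final step is the only place requiring a moment's care: the universally quantified conjunction splits into the conjunction of the two universally quantified statements, which is exactly $z\in\textbf{cl}_{M(R)}(\{x\})$ and $z\in\textbf{cl}_{M(R)}(\{y\})$, i.e. $z\in\textbf{cl}_{M(R)}(\{x\})\cap\textbf{cl}_{M(R)}(\{y\})$.

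Alternatively, one can stay closer to the proof of Corollary \ref{corclausuraproducto1}: if $x$ or $y$ is a unit, then by Theorem \ref{thsobreUnidades}(5) its closure is all of $R^0$, and multiplying by a unit does not change the generated ideal and hence not the closure, so the identity is immediate; otherwise one writes the prime decompositions of $x$ and $y$ and applies Corollary \ref{corclausura2} to each of $x$, $y$, and $xy$, using that the distinct primes dividing $xy$ are the union of those dividing $x$ and $y$.

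I expect the main obstacle to be purely bookkeeping rather than conceptual. In the direct approach it is the restriction to $R^0$: one must check that ranging $k$ only over $R^0$ (excluding $\sigma_0=U(R)$) and intersecting the basic sets with $R^0$ does not affect the closure characterization, which holds precisely because $x,y,z$ are all nonzero. In the decomposition approach the analogous care is with shared prime factors and with the comaximality hypothesis of Corollary \ref{corclausura2} when merging the factorizations of $x$ and $y$ into that of $xy$. A pleasant side remark worth noting is that the direct argument never uses that $R$ is a PID: it holds for $M(R)$ over any integral domain that is not a field.
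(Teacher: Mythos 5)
Your proof is correct. Your second route is essentially the paper's own argument: the paper proves this corollary ``similarly'' to Corollary \ref{corclausuraproducto1}, i.e., by disposing of the unit case via Theorem \ref{thsobreUnidades} and otherwise applying Corollary \ref{corclausura2} to the prime decompositions of $x$, $y$, and $xy$, so that both sides reduce to $\bigcap_i\langle p_i\rangle^0\cap\bigcap_j\langle q_j\rangle^0$. Your first route is genuinely different: it uses only the equivalence that $xy\in\sigma_k$ if and only if $x\in\sigma_k$ and $y\in\sigma_k$ --- already established in the proof that $\widetilde{M(R)}$ is a topological semigroup, and also obtainable from $\sigma_{xy}=\sigma_x\cap\sigma_y$ via the symmetry of the relation $\langle a\rangle+\langle b\rangle=R$ --- together with the characterization of closure by basic open sets. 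The quantifier step you flag (a universally quantified conjunction splits into the conjunction of the universally quantified statements) is a valid logical equivalence, and the restriction of $k$ to $R^0$ is harmless exactly as you say, since $x$, $y$, and the point $z$ being tested are all nonzero. What the direct argument buys is generality: it uses neither unique factorization nor the PID hypothesis, so the identity holds in $M(R)$ for an arbitrary integral domain (the computation is even valid in any commutative ring with identity); combined with the paper's own deduction of ultraconnectedness from this identity, it in fact answers affirmatively the paper's later question of whether $M(R)$ is ultraconnected when $R$ is not a PID. What the paper's decomposition approach buys instead is the explicit description of closures as intersections of prime ideals, which is the form actually consumed by the density arguments of Subsection \ref{subsection3}.
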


A topological space $X$ is ultraconnected if every two non-empty closed subsets are not disjoint. Equivalently, $X$ is ultraconnected if and only if the closure of two distinct points always have non-trivial intersection.

\begin{corollary}
$\widetilde{M(R)}$ is ultraconnected.
\end{corollary}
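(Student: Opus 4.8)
The goal is to show that $\widetilde{M(R)}$ is ultraconnected, meaning any two nonempty closed sets intersect. The plan is to reduce this to a statement about closures of singletons, then exploit the multiplicative closure formula just established.

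First I would recall the characterization given in the excerpt: a space is ultraconnected if and only if the closures of any two distinct points have nonempty intersection. So it suffices to show that $\textbf{cl}_{\widetilde{M(R)}}(\{x\})\cap\textbf{cl}_{\widetilde{M(R)}}(\{y\})\neq\emptyset$ for all $x,y\in R$. The natural move is to invoke Corollary \ref{corclausuraproducto1}, which tells us that this intersection equals $\textbf{cl}_{\widetilde{M(R)}}(\{xy\})$. Hence the entire problem collapses to verifying that $\textbf{cl}_{\widetilde{M(R)}}(\{xy\})$ is nonempty.

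Next I would observe that singleton closures are never empty, since $x\in\textbf{cl}_{\widetilde{M(R)}}(\{x\})$ always (a point lies in its own closure). In particular $xy\in\textbf{cl}_{\widetilde{M(R)}}(\{xy\})$, so the intersection $\textbf{cl}_{\widetilde{M(R)}}(\{x\})\cap\textbf{cl}_{\widetilde{M(R)}}(\{y\})$ contains $xy$ and is therefore nonempty. This handles the generic case cleanly. One subtlety I would want to check is that the equivalence I am using (ultraconnectedness via pairwise intersection of point closures) genuinely reduces to \emph{distinct} points only, and that the degenerate cases — where $x$ or $y$ is $0$ or a unit — are already covered, but these are exactly the cases Corollary \ref{corclausuraproducto1} dispatches via Theorem \ref{thsobreUnidades}, so no separate argument is needed.

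I do not anticipate a serious obstacle here: the heavy lifting was done in establishing the multiplicative closure identity, and ultraconnectedness follows as a formal consequence. The only point requiring mild care is confirming that the product $xy$ lands in the ambient space $R$ (which it does, since $R$ is closed under multiplication) so that its closure is a legitimate nonempty closed subset. If I wanted to be maximally explicit, I would phrase the whole argument in one line: for arbitrary $x,y\in R$, the element $xy$ belongs to $\textbf{cl}_{\widetilde{M(R)}}(\{xy\})=\textbf{cl}_{\widetilde{M(R)}}(\{x\})\cap\textbf{cl}_{\widetilde{M(R)}}(\{y\})$, so the two closures meet; since every nonempty closed set contains the closure of one of its points, any two nonempty closed sets intersect, and $\widetilde{M(R)}$ is ultraconnected.
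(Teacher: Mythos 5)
Your argument is correct and is essentially the paper's own proof: both reduce the claim to Corollary \ref{corclausuraproducto1}, noting that for $x\in F$ and $y\in G$ the element $xy$ lies in $\textbf{cl}_{\widetilde{M(R)}}(\{x\})\cap\textbf{cl}_{\widetilde{M(R)}}(\{y\})\subset F\cap G$. No substantive difference in approach.
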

\begin{proof}
Let $F$ and $G$ be two non-trivial closed sets  in
$\widetilde{M(R)}$. Let $x \in F$ and  $y \in G$. 
Then, by Corollary \ref{corclausuraproducto1}, we have that
\begin{equation*}
\{x\cdot y\}\subset\textbf{cl}_{\widetilde{M(R)}}(\{xy\}) = \textbf{cl}_{\widetilde{M(R)}}(\{x\}) \cap \textbf{cl}_{\widetilde{M(R)}}(\{y\})
\subset F\cap G.
\end{equation*}
Therefore, $F\cap G\neq\emptyset$.  
\end{proof}

Similarly, we prove the following corollary.

\begin{corollary}
    $M(R)$ is ultraconnected.
\end{corollary}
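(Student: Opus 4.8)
The plan is to mirror the argument just given for $\widetilde{M(R)}$, substituting the product formula for closures in $M(R)$ for the one used there. First I would take two arbitrary non-empty closed sets $F$ and $G$ in $M(R)$ and choose points $x\in F$ and $y\in G$; since the underlying set of $M(R)$ is $R^0$, both $x$ and $y$ are automatically nonzero.

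The one point that requires a moment's care — and the only genuine difference from the $\widetilde{M(R)}$ case — is verifying that the witness $xy$ actually lives in the space $M(R)$, i.e. that $xy\in R^0$. This is exactly where the integral domain hypothesis enters: because $R$ has no zero divisors and $x,y\neq 0$, the product $xy$ is again nonzero, so $\{xy\}$ is a legitimate singleton in $R^0$ and the relevant closures make sense in $M(R)$.

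With $xy\in R^0$ secured, I would invoke the preceding corollary, which yields $\textbf{cl}_{M(R)}(\{xy\}) = \textbf{cl}_{M(R)}(\{x\}) \cap \textbf{cl}_{M(R)}(\{y\})$. Since $F$ and $G$ are closed and contain $x$ and $y$ respectively, we have $\textbf{cl}_{M(R)}(\{x\})\subset F$ and $\textbf{cl}_{M(R)}(\{y\})\subset G$, so the chain $\{xy\}\subset\textbf{cl}_{M(R)}(\{xy\})=\textbf{cl}_{M(R)}(\{x\})\cap\textbf{cl}_{M(R)}(\{y\})\subset F\cap G$ exhibits a common point and forces $F\cap G\neq\emptyset$, which is precisely ultraconnectedness.

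I expect no real obstacle here: the entire content is carried by the closure-of-product identity from the previous corollary, and the only thing to check beyond the $\widetilde{M(R)}$ version is the stability of $R^0$ under multiplication, which is immediate from the absence of zero divisors in $R$.
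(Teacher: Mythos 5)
Your proof is correct and is essentially the argument the paper intends: it states the corollary follows ``similarly'' to the $\widetilde{M(R)}$ case, using the $M(R)$ version of the closure-of-product identity. Your extra check that $xy\in R^0$ (via the absence of zero divisors) is exactly the one detail that needs verifying in passing from $\widetilde{M(R)}$ to $M(R)$.
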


Since every ultraconnected space is normal, limit point compact, pseudocompact and path-connected, we have the following corollary.

\begin{corollary}
    $\widetilde{M(R)}$ and $M(R)$ are normal, limit point compact, pseudocompact and path-connected.
\end{corollary}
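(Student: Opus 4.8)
The plan is to observe that normality, limit point compactness, pseudocompactness and path-connectedness are all purely topological consequences of ultraconnectedness, so by the two preceding corollaries (which give that $\widetilde{M(R)}$ and $M(R)$ are ultraconnected) it suffices to establish each of the four implications for an arbitrary ultraconnected space $X$ and then specialize to $X=\widetilde{M(R)}$ and $X=M(R)$. I would dispose of \emph{normality} first, since it holds vacuously: if $A,B$ are disjoint closed sets, ultraconnectedness forbids them from both being non-empty, so one is empty and the pair $(\emptyset,X)$ separates them by disjoint open sets.

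For \emph{pseudocompactness} I would prove the stronger statement that every continuous $f\colon X\to\mathbb{R}$ is constant. First I would check that the continuous image $f(X)$, with the subspace topology, is again ultraconnected: given non-empty closed $C_1,C_2\subseteq f(X)$, their preimages are non-empty closed subsets of $X$, hence meet, and a common point maps into $C_1\cap C_2$. Since $\mathbb{R}$ is $T_1$, any two distinct points of $f(X)$ would constitute disjoint non-empty closed subsets of $f(X)$, contradicting ultraconnectedness; therefore $f(X)$ is a single point and $f$ is bounded. For \emph{limit point compactness} the key observation is that a subset $A$ with empty derived set is not merely closed but \emph{hereditarily} closed, as every subset of $A$ also has empty derived set. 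Thus if some infinite $A\subseteq X$ had no limit point, I could partition it into two disjoint non-empty pieces $A_1,A_2$, both closed by this remark, contradicting ultraconnectedness; hence every infinite subset has a limit point.

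The step I expect to be the main obstacle is \emph{path-connectedness}, since it requires an explicit construction rather than a vacuous or negation argument. Given $a,b\in X$, I would apply ultraconnectedness to the non-empty closed sets $\textbf{cl}_X(\{a\})$ and $\textbf{cl}_X(\{b\})$ to produce a common point $c$. The crucial lemma is that whenever $c\in\textbf{cl}_X(\{a\})$ the map $\gamma\colon[0,1]\to X$ defined by $\gamma(t)=a$ for $t<1$ and $\gamma(1)=c$ is continuous: for an open $U\subseteq X$, the relation $c\in\textbf{cl}_X(\{a\})$ forces the implication $c\in U\Rightarrow a\in U$, so $\gamma^{-1}(U)$ is one of $\emptyset$, $[0,1)$, $[0,1]$, each open in $[0,1]$. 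Concatenating such a path from $a$ to $c$ with the reverse of the analogous path from $b$ to $c$ yields a path from $a$ to $b$.

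Finally I would note that the only place where any algebraic property of $R$ enters is through the preceding corollaries establishing ultraconnectedness; the four implications above are entirely topological, so applying them to $\widetilde{M(R)}$ and to $M(R)$ gives the corollary with no further hypotheses on $R$.
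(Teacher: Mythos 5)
Your proposal is correct and follows the same route as the paper, which simply invokes the standard fact that every ultraconnected space is normal, limit point compact, pseudocompact and path-connected and applies it to the two preceding corollaries. The only difference is that you supply full proofs of these four general implications (all of which check out, including the two-point path $\gamma(t)=a$ for $t<1$, $\gamma(1)=c$), whereas the paper cites them without proof.
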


A $\mathrm{T_0}$ space is a topological space in which every pair of distinct points is topologically distinguishable. That is, for any two different points $x$ and $y$ there is an open set that contains one of these points and not the other. 

For each $x\in R^0-U(R)$, define $$\mathcal{F}(x):=\{p\in R^0: p \ \ \text{is an irreducible element and } \ \ p\mid x\}.$$

\begin{theorem}
    Let $x,y\in R^0-U(R)$. Then, $x$ and $y$ are topologically indistinguishable if and only if $\mathcal{F}(x)=\mathcal{F}(y)$.
\end{theorem}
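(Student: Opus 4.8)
The plan is to prove both implications by connecting topological indistinguishability to the behavior of the basic open sets $\sigma_k$, and then to translate membership in $\sigma_k$ into a divisibility statement via the set of irreducible divisors $\mathcal{F}(x)$. Recall that two points $x,y$ are topologically indistinguishable precisely when every open set containing one contains the other; since $\beta_{R^0}$ is a basis, this is equivalent to the condition that for every $k\in R^0$, we have $x\in\sigma_k^0 \iff y\in\sigma_k^0$. So the entire statement reduces to showing that the family of basic sets containing $x$ is determined by, and determines, $\mathcal{F}(x)$.

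The key technical step is to characterize membership $x\in\sigma_k$ in terms of irreducible divisors. I would argue that $x\in\sigma_k$ if and only if $\mathcal{F}(x)\cap\mathcal{F}(k)=\emptyset$, i.e. $x$ and $k$ share no common irreducible divisor (up to associates). Indeed, $x\in\sigma_k$ means $\langle x\rangle+\langle k\rangle=R$, and in a PID this coprimality of the generated ideals is exactly the statement that no irreducible element divides both $x$ and $k$; if some irreducible $p$ divided both, then $\langle x\rangle+\langle k\rangle\subseteq\langle p\rangle\neq R$, while conversely if they share no irreducible factor then their gcd is a unit and Bézout gives the identity. This lemma is the heart of the argument, and I expect it to be the main obstacle only in the sense of stating it carefully with respect to associates: since irreducibles are identified up to units, $\mathcal{F}(x)$ should be read as a set of irreducibles modulo associates, and one must check the coprimality criterion respects this.

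With the characterization in hand, the reverse implication is immediate: if $\mathcal{F}(x)=\mathcal{F}(y)$, then for every $k$ we have $\mathcal{F}(x)\cap\mathcal{F}(k)=\emptyset \iff \mathcal{F}(y)\cap\mathcal{F}(k)=\emptyset$, hence $x\in\sigma_k \iff y\in\sigma_k$, and (since both $x,y\neq 0$) the same holds for $\sigma_k^0$, giving indistinguishability in $M(R)$.

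For the forward implication I would argue contrapositively. Suppose $\mathcal{F}(x)\neq\mathcal{F}(y)$; without loss of generality pick an irreducible $p$ with $p\in\mathcal{F}(x)$ but $p\notin\mathcal{F}(y)$. Then $p\mid x$, so $\mathcal{F}(x)\cap\mathcal{F}(p)\neq\emptyset$ and thus $x\notin\sigma_p$, while $p\nmid y$ gives $\mathcal{F}(y)\cap\mathcal{F}(p)=\{p\}\cap\mathcal{F}(y)=\emptyset$ (using that $p$ is the only irreducible divisor of $p$ up to associates), so $y\in\sigma_p$, and since $y\neq 0$ also $y\in\sigma_p^0$. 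The basic open set $\sigma_p^0$ therefore separates $y$ from $x$, witnessing that $x$ and $y$ are topologically distinguishable. This completes the contrapositive and hence the forward direction, and the theorem follows.
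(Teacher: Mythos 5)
Your proof is correct and is essentially the argument the paper leaves implicit (the paper only says "the proof is straightforward"): reduce indistinguishability to membership in the basic sets $\sigma_k^0$, characterize $x\in\sigma_k$ in a PID as $x$ and $k$ having no common irreducible divisor, and separate $x$ from $y$ by $\sigma_p^0$ for an irreducible $p$ dividing one but not the other. Your care about associates is not even strictly necessary, since $\mathcal{F}(x)$ as defined is automatically a union of associate classes, but it does no harm.
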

\begin{proof}
The proof is straightforward.
\end{proof}

\begin{corollary}
$M(R)$ does not satisfy the $\mathrm{T}_0$ separation axiom.
\end{corollary}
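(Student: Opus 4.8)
The plan is to exhibit a single pair of distinct, topologically indistinguishable points and then invoke the theorem just proven that characterizes indistinguishability of non-units $x,y$ by the equality $\mathcal{F}(x)=\mathcal{F}(y)$. Since $R$ is a PID that is not a field, it is in particular a UFD possessing at least one nonzero non-unit; factoring such an element produces an irreducible (prime) element $p\in R^0-U(R)$. This existence fact is the only input I need beyond the indistinguishability theorem, and it is immediate from the standing assumptions on $R$.

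First I would consider the two elements $p$ and $p^2$. Both lie in $R^0-U(R)$: neither is $0$, and since $p$ is not a unit, $p^2$ is not a unit either. They are distinct, for if $p=p^2$ then $p(p-1)=0$, and as $R$ is a domain with $p\neq 0$ this would force $p=1\in U(R)$, contradicting the irreducibility of $p$. Thus $p$ and $p^2$ form a legitimate pair of distinct points in $R^0-U(R)$ to which the previous theorem applies.

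Next I would compare the divisor sets. The irreducible divisors of $p$ are exactly its associates $up$ with $u\in U(R)$, and the irreducible divisors of $p^2$ are the same associates; hence $\mathcal{F}(p)=\mathcal{F}(p^2)$. By the preceding theorem, $p$ and $p^2$ are therefore topologically indistinguishable, which means that every basic open set $\sigma_k^0$ containing one of them contains the other. Since $p\neq p^2$, this exhibits two distinct points of $M(R)$ that no open set can separate, so $M(R)$ fails the $\mathrm{T}_0$ separation axiom.

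There is essentially no hard step once the indistinguishability theorem is available; the only two points requiring a moment's care are the existence of an irreducible element (guaranteed because a non-field PID is a UFD containing nonzero non-units) and the observation that $p$ and $p^2$ share the same set of irreducible divisors up to associates, which is immediate from unique factorization.
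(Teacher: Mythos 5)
Your proof is correct and is exactly the argument the paper intends: the corollary is stated without proof as an immediate consequence of the preceding indistinguishability theorem, and taking the pair $p$, $p^2$ (distinct non-units with $\mathcal{F}(p)=\mathcal{F}(p^2)$) is the canonical way to instantiate it. The two points you flag as needing care — existence of an irreducible element in a non-field PID, and the equality of the sets of irreducible divisors — are both handled correctly.
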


Up to this point it's worth asking the following questions:

\begin{question}
    Let $R$ be an integral domain that is not a PID. Is $\widetilde{M(R)}$ or $M(R)$ ultraconnected?
\end{question}

\begin{question}
    Let $R$ be an integral domain that is not a PID. Is $\widetilde{M(R)}$ or $M(R)$ not satisfying the $\mathrm{T}_0$ axiom?
\end{question}


\begin{example}
    Note that if $R$ is not an integral domain, it is possible for $M(R)$ to not be ultraconnected. For example, consider $R=\mathbb{Z}/6\mathbb{Z}$. Then, $\sigma^0_{\overline{2}}=\{\overline{1},\overline{3},\overline{5}\}$ and $\sigma^0_{\overline{3}}=\{\overline{1},\overline{2},\overline{4},\overline{5}\}$. Thus, $\{\overline{3}\}$ and $\{\overline{2},\overline{4}\}$ are closed and disjoint in $M(R)$, hence it is not ultraconnected. However, $\widetilde{M(R)}$ is ultraconnected (since $\{\overline{0},\overline{3}\}$ and $\{\overline{0},\overline{2},\overline{4}\}$ are closed in $\widetilde{M(R)}$).
\end{example}

\subsection{On the infinitude of primes}\label{subsection3} In this subsection, $R$ is an infinite PID (not a field) with a finite number of units. We aim to prove, topologically, the following proposition.

\begin{proposition}\label{proprimos}
   $R$ has infinitely many maximal ideals.
\end{proposition}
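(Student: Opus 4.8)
The plan is to argue by contradiction in the spirit of Furstenberg's proof, exploiting the fact already established at the end of Section \ref{section2} that every non-empty open set in $M(R)$ is infinite, together with the standing hypothesis of Subsection \ref{subsection3} that $U(R)$ is finite. Since $R$ is a PID, its maximal ideals are exactly the ideals $\langle p\rangle$ with $p$ irreducible, and two such ideals coincide precisely when the irreducibles are associates; hence the statement is equivalent to asserting that $R$ admits infinitely many pairwise non-associate irreducible elements. So I would suppose, toward a contradiction, that there are only finitely many maximal ideals $\langle p_1\rangle,\ldots,\langle p_n\rangle$, with $p_1,\ldots,p_n$ a complete list of irreducibles up to associates.

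The key step is to form the single element $k:=p_1p_2\cdots p_n\in R^0$ and to compute the basic open set $\sigma_k^0$. First I would record that, because $R$ is a PID, $\langle k\rangle+\langle s\rangle=\langle \gcd(k,s)\rangle$, so $s\in\sigma_k$ if and only if $\gcd(k,s)$ is a unit, that is, $k$ and $s$ share no common irreducible factor. Since every non-zero non-unit of $R$ factors into irreducibles, and each such irreducible is an associate of some $p_i$ (by the finiteness assumption, as the maximal ideal it generates must be one of the $\langle p_i\rangle$), an element $s\in R^0$ is coprime to $k$ if and only if it is divisible by none of $p_1,\ldots,p_n$, which happens if and only if $s\in U(R)$. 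Therefore $\sigma_k^0=U(R)$.

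This yields the contradiction immediately: $\sigma_k^0$ is a basic open set of $M(R)$ containing $1$, hence non-empty, and by the result recalled at the close of Section \ref{section2} every non-empty open set of $M(R)$ is infinite; but $U(R)$ is finite by hypothesis. Thus $R$ cannot have only finitely many maximal ideals, which proves the proposition. I expect the only point requiring care to be the identification $\sigma_k^0=U(R)$: it rests on unique factorization into irreducibles in the PID $R$ and on the fact that finitely many maximal ideals means finitely many associate-classes of irreducibles, so that the single product $k$ genuinely exhausts every prime. This is also precisely where the hypothesis that $U(R)$ is finite (or, more generally, that $U(R)$ is not open, as foreshadowed in the remarks following Theorem \ref{thsobreUnidades}) becomes indispensable, since without such a hypothesis the equality $\sigma_k^0=U(R)$ would carry no contradiction.
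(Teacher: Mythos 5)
Your proof is correct, but it takes a genuinely different and more direct route than the paper. The paper factors the argument through density: it first proves that $\mathcal{P}$ is infinite if and only if $\mathcal{P}$ is dense in $M(R)$ (Theorem \ref{thcarprimes}), then shows that $\mathcal{P}$ is dense in the subspace $R^0\setminus U(R)$ by computing $\bigcup_{p\in\mathcal{P}}\mathbf{cl}_{M(R^0\setminus U(R))}(\{p\})=\bigcup_{p\in\mathcal{P}}\langle p\rangle^0=R^0\setminus U(R)$ via Corollary \ref{corclausura2} and unique factorization, and finally transfers density to $M(R)$. You instead argue by contradiction with a single element: if $p_1,\ldots,p_n$ were a complete list of irreducibles up to associates, then for $k=p_1\cdots p_n$ unique factorization forces $\sigma_k^0=U(R)$, which is impossible because $\sigma_k^0$ is a non-empty open set (it contains $1$) and hence infinite, while $U(R)$ is finite. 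The two arguments share the same engine --- the infinitude of non-empty open sets in $M(R)$ together with the hypothesis that $U(R)$ is finite (or merely not open) --- and indeed the product-of-finitely-many-primes trick appears inside the paper's proof of Theorem \ref{thcarprimes}. What your version buys is economy: you need no closure computations and no density lemmas. What the paper's version buys is the density characterization itself, which generalizes \cite[Theorem 4]{jhixon2024} and is a result of independent interest beyond Proposition \ref{proprimos}. The one step you rightly flag as delicate, $\sigma_k^0=U(R)$, is sound: in a PID $\langle k\rangle+\langle s\rangle=\langle d\rangle$ for a gcd $d$ of $k$ and $s$, so $s\in\sigma_k$ exactly when $s$ shares no irreducible factor with $k$, and under the finiteness assumption the only such non-zero elements are units.
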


The idea is to generalize the topological proof of the infinitude of prime numbers, presented in \cite{jhixon2024}. In this regard, we present the following result.

\begin{theorem}\label{thcarprimes}
 Let $\mathcal{P}$ be the collection of prime elements of $R$. Then, $\mathcal{P}$ is infinite if and only if $\mathcal{P}$ is dense in $M(R)$.
\end{theorem}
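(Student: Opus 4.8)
The plan is to prove the two implications separately, and the crux will be the forward direction, which is really a topological restatement of the classical Euclidean argument. Before starting I want to understand what density means in $M(R)$: a set $D\subseteq R^0$ is dense exactly when it meets every nonempty basic open set $\sigma_k^0$ for $k\in R^0$. Since $\sigma_1^0 = R^0$ (as $1\in U(R)$ forces $\sigma_1 = R$), and more importantly since every nonempty open set is infinite (established in the remarks at the end of Section~\ref{section2}), the relevant content is whether $\mathcal{P}$ intersects each $\sigma_k^0$ for non-unit $k$.

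\textbf{The easy direction.} Suppose $\mathcal{P}$ is dense. I would argue by contraposition: if $\mathcal{P}$ were finite, say $\mathcal{P} = \{p_1,\dots,p_n\}$ up to associates (here I use that in a PID the prime elements, viewed up to units, correspond bijectively to the maximal ideals, so finitely many primes means finitely many maximal ideals), then I exhibit a nonempty basic open set disjoint from $\mathcal{P}$. The natural candidate is $\sigma_k^0$ where $k = p_1 p_2 \cdots p_n$. By the multiplicativity $\sigma_k = \sigma_{p_1}\cap\cdots\cap\sigma_{p_n}$ proved in the first theorem, an element $s$ lies in $\sigma_k$ iff $\langle p_i\rangle + \langle s\rangle = R$ for every $i$, i.e.\ no $p_i$ divides $s$. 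But every prime is an associate of some $p_i$ and hence is divisible by that $p_i$, so no prime lies in $\sigma_k$; thus $\mathcal{P}\cap\sigma_k^0 = \emptyset$ while $\sigma_k^0\neq\emptyset$, contradicting density. This shows density forces $\mathcal{P}$ infinite, hence (via the PID correspondence) infinitely many maximal ideals, which also yields Proposition~\ref{proprimos} as promised.

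\textbf{The main direction.} Now suppose $\mathcal{P}$ is infinite; I must show $\mathcal{P}$ meets every nonempty $\sigma_k^0$. Fix a non-unit $k\in R^0$ and write its prime factorization $k = q_1^{a_1}\cdots q_m^{a_m}$ with the $q_j$ pairwise non-associate. As above, $s\in\sigma_k$ iff none of the $q_j$ divides $s$, equivalently $s$ avoids the finitely many maximal ideals $\langle q_1\rangle,\dots,\langle q_m\rangle$. Since $\mathcal{P}$ is infinite, there are infinitely many primes up to associates, so I may pick a prime $p$ not associate to any $q_j$; for such $p$ we have $\langle q_j\rangle + \langle p\rangle = R$ for every $j$ (two distinct maximal ideals sum to $R$), hence $p\in\sigma_{q_j}$ for all $j$ and therefore $p\in\sigma_k$. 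As $p\neq 0$, this gives $p\in\mathcal{P}\cap\sigma_k^0$, establishing density.

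\textbf{Where the difficulty lies.} The genuinely delicate point is the translation between the purely algebraic statement ``finitely many prime elements'' and the topological/geometric statement ``finitely many maximal ideals,'' together with the fact that the basic open set $\sigma_k$ sees only the \emph{radical} of $k$ (its set of prime divisors), not the multiplicities. I would make this precise once via the identity $s\in\sigma_k \iff q_j\nmid s$ for all prime divisors $q_j$ of $k$, which follows cleanly from the intersection formula $\sigma_k = \bigcap_j \sigma_{q_j}$ and the maximality of $\langle q_j\rangle$ in a PID. The hypothesis that $R$ has only finitely many units is what guarantees ``infinitely many primes'' and ``infinitely many associate-classes of primes'' coincide, so that the pigeonhole step producing a prime away from $q_1,\dots,q_m$ is valid; I would flag this as the essential use of the standing finiteness-of-units assumption. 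No heavy computation is needed beyond these structural observations.
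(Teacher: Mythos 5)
Your proof is correct and follows essentially the same route as the paper: the forward direction produces a prime coprime to the given $k$ (using that finitely many units makes ``infinitely many primes'' equivalent to ``infinitely many associate classes''), and the reverse direction is the Euclid-style argument applied to $\sigma_{p_1\cdots p_n}^0$, which you phrase contrapositively where the paper argues directly. Your write-up is in fact more explicit than the paper's about why a coprime prime exists and where the finiteness of $U(R)$ is actually used, but the underlying argument is the same.
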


\begin{proof}
Suppose $\mathcal{P}$ is infinite. Let $\sigma_k^0\in\beta_{R^0}$. Then, we can take $p\in\mathcal{P}$ such that $\langle k \rangle + \langle p \rangle =R $. Thus $\sigma_k^0\cap \mathcal{P}\neq\emptyset$. Therefore, $\mathcal{P}$ is dense in $M(R)$. Conversely, suppose $\mathcal{P}$ is dense in $M(R)$ and consider a finite collection of elements from $\mathcal{P}$, say $\{p_1,p_2,\dots , p_n\}$. Let $x=p_1\cdot p_2\cdots p_n$. Then, $x\in R^0$. Now, observe that $\sigma_x^0\neq U(R)$, since $\sigma_x^0$ is infinite. Therefore, since $\mathcal{P}$ is dense in $M(R)$, there exists a $q\in\mathcal{P}\setminus\{p_1,p_2,\dots p_n\}$ such that $q\in \sigma_{x}^0\cap\mathcal{P}$. Hence, $\mathcal{P}$ is dense in $M(R)$.
\end{proof}

\begin{theorem}
 If $\mathcal{P}$ is dense in $R^0\setminus U(R)$ with the induced subspace topology from $M(R)$, denoted by $M(R^0\setminus U(R))$, then $\mathcal{P}$ is dense in $M(R)$.
\end{theorem}

\begin{proof}
Note that $R^0\setminus U(R)$ is dense in $M(R)$. The result follows from the transitive property of density.
\end{proof}

\begin{theorem}
 $\mathcal{P}$ is dense in $R^0\setminus U(R)$ with the induced subspace topology from $M(R)$.
\end{theorem}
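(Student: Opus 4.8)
The plan is to verify density directly against the basic open sets of the subspace. Since a prime is by definition a nonzero non-unit, we have $\mathcal{P}\subseteq R^0\setminus U(R)$, so it suffices to show that every non-empty basic open set of $M(R^0\setminus U(R))$ contains a prime. The basic open sets of the subspace are precisely the sets of the form $\sigma_k^0\cap(R^0\setminus U(R))$ with $k\in R^0$, so the goal reduces to the following: whenever $\sigma_k^0\cap(R^0\setminus U(R))\neq\emptyset$, this set meets $\mathcal{P}$.

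First I would fix such a $k$ and choose a witness $x\in\sigma_k^0\cap(R^0\setminus U(R))$; thus $x$ is a nonzero non-unit satisfying $\langle k\rangle+\langle x\rangle=R$. Because $R$ is a PID and hence a UFD, and $x$ is a nonzero non-unit, $x$ admits a prime factor $p$. Then $x\in\langle p\rangle$, and therefore $\langle x\rangle\subseteq\langle p\rangle$.

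The key step is to transfer the coprimality of $x$ and $k$ to $p$ and $k$. From $\langle x\rangle\subseteq\langle p\rangle$ one obtains
\[
R=\langle x\rangle+\langle k\rangle\subseteq\langle p\rangle+\langle k\rangle\subseteq R,
\]
so $\langle p\rangle+\langle k\rangle=R$, that is, $p\in\sigma_k^0$. Since $p$ is prime it lies in $R^0\setminus U(R)$, whence $p\in\sigma_k^0\cap(R^0\setminus U(R))\cap\mathcal{P}$, and the chosen open set meets $\mathcal{P}$. As this holds for every non-empty basic open set of the subspace, $\mathcal{P}$ is dense in $M(R^0\setminus U(R))$.

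The point requiring care is that this argument must not secretly presuppose that $\mathcal{P}$ is infinite, since density of $\mathcal{P}$ is exactly what produces infinitude through Theorem \ref{thcarprimes}. It does not: for a fixed non-empty neighborhood I need only one prime dividing the \emph{already chosen} element $x$, and the existence of such a prime is guaranteed by unique factorization alone. I also note that finiteness of $U(R)$ is not actually invoked here—only that the neighborhoods under consideration are non-empty—so the statement holds for every infinite PID that is not a field, with the finiteness hypothesis entering the argument only later, in Theorem \ref{thcarprimes}.
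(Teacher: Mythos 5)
Your proof is correct. At bottom it rests on the same two facts as the paper's argument: every nonzero non-unit of a PID has a prime factor, and if $p\mid x$ then any $\sigma_k$ containing $x$ contains $p$. But the packaging is genuinely different. The paper works with closures: it invokes Corollary \ref{corclausura2} to get $\mathbf{cl}_{M(R)}(\{p\})=\langle p\rangle^0$, takes the union over $\mathcal{P}$, and uses unique factorization in the form $R^0\setminus U(R)=\bigcup_{p\in\mathcal{P}}\langle p\rangle^0$ to conclude that the closure of $\mathcal{P}$ is everything. You instead verify density pointwise against the basic open sets of the subspace, re-deriving the divisibility-transfer step directly via $\langle x\rangle\subseteq\langle p\rangle$ and $R=\langle x\rangle+\langle k\rangle\subseteq\langle p\rangle+\langle k\rangle$. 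Your route is more self-contained (it does not lean on the closure computations of Section \ref{section3}), while the paper's route reuses machinery it has already built and makes visible the structural identity $R^0\setminus U(R)=\bigcup_{p}\langle p\rangle^0$. Your closing observations are also accurate: the argument does not presuppose infinitude of $\mathcal{P}$, and finiteness (or non-openness) of $U(R)$ plays no role in this particular theorem --- it enters only in Theorem \ref{thcarprimes}.
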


\begin{proof}
In any topological space, the union of closures of subsets of that space is contained in the closure of the union of those sets. Therefore,
$$\displaystyle\bigcup_{p\in\mathcal{P}}\mathbf{cl}_{M(R^0\setminus U(R))}(\{p\})\subset\mathbf{cl}_{M(R^0\setminus U(R))}(\mathcal{P})\subset R^0\setminus U(R).$$ 

Moreover, by Corollary \ref{corclausura2}, we have

$$\bigcup_{p\in\mathcal{P}}\mathbf{cl}_{M(R^0\setminus U(R))}(\{p\})=\bigcup_{p\in\mathcal{P}}(\mathbf{cl}_{M(R^0)}(\{p\})\cap 
(R^0\setminus U(R)))=\bigcup_{p\in\mathcal{P}}\langle p \rangle ^0.$$

Now, since $R$ is a unique factorization domain, we have $R^0\setminus U(R)=\bigcup_{p\in \mathcal{P}}\langle p \rangle ^0$. Therefore, $\mathbf{cl}_{M(R^0\setminus U(R))}(\mathcal{P})=R^0\setminus U(R)$.
\end{proof}

The last three theorems establish Proposition \ref{proprimos}.  Thus, we obtain a new topological proof of the infinitude of primes for rings such as the integers $\mathbb{Z}$, the Gaussian integers $\mathbb{Z}[i]$, the Eisenstein integers $\mathbb{Z}[\omega]$ (where $\omega$ is a primitive cube root of 1), etc.

Note that the argument to prove Theorem \ref{thcarprimes} works because under the given hypothesis for $U(R)$, it is guaranteed that it is not open. Therefore, we have the following corollary.

\begin{corollary}
If $U(R)$ is not open in $M(R)$, then $R$ has infinitely many maximal ideals.
\end{corollary}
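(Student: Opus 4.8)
The plan is to prove the contrapositive directly, using the single structural fact that a unit is coprime to every element, so that $U(R)\subseteq\sigma_k^0$ for every $k\in R^0$ (Theorem \ref{thsobreUnidades}(4)). First I would suppose, toward a contradiction, that $R$ has only finitely many maximal ideals; since $R$ is a PID these correspond to finitely many irreducible elements up to associates, say with representatives $p_1,\dots,p_n$. Setting $x=p_1\cdots p_n\in R^0$, unique factorization gives that every non-unit of $R$ is divisible by some $p_i$, hence is not coprime to $x$; thus $\sigma_x^0\subseteq U(R)$. Combined with the reverse inclusion $U(R)\subseteq\sigma_x^0$ this yields $\sigma_x^0=U(R)$, and since $\sigma_x^0$ is a basic open set of $M(R)$ this makes $U(R)$ open, contradicting the hypothesis. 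Hence $R$ has infinitely many maximal ideals.

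Alternatively, and in keeping with the preceding development, I would verify that the hypothesis ``finitely many units'' was used in the three theorems establishing Proposition \ref{proprimos} only through the consequence ``$U(R)$ is not open,'' so that the whole chain survives under the weaker assumption. The theorem giving density of $\mathcal{P}$ in $R^0\setminus U(R)$ uses only Corollary \ref{corclausura2} and unique factorization, so it is unaffected. The step deducing density of $\mathcal{P}$ in $M(R)$ needs $R^0\setminus U(R)$ to be dense in $M(R)$, and the converse half of Theorem \ref{thcarprimes} needs the inequality $\sigma_x^0\neq U(R)$; both of these I would extract from ``$U(R)$ not open'' exactly as in the direct argument. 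One then concludes $\mathcal{P}$ is dense, and the coprimality computation above shows the primes produced lie in pairwise distinct associate classes, giving infinitely many maximal ideals.

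The crux, and the only step that is not a transcription of earlier arguments, is the equivalence in this setting between ``$U(R)$ is not open'' and ``$U(R)$ has empty interior'' (equivalently, $R^0\setminus U(R)$ is dense). This is precisely what the inclusion $U(R)\subseteq\sigma_k^0$ delivers: any basic open set contained in $U(R)$ must in fact equal $U(R)$, so ``$U(R)$ not open'' already forbids any basic open set from sitting inside $U(R)$. I expect no further obstacle, since the remaining ingredients---unique factorization, the correspondence between maximal ideals and associate classes of irreducibles in a PID, and the fact that each $\sigma_k^0$ is a non-empty basic open set---are all available from the standing hypotheses.
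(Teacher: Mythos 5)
Your proposal is correct, and your primary argument is genuinely different from --- and more self-contained than --- what the paper does. The paper justifies this corollary only by observing that the entire chain (Theorem \ref{thcarprimes} together with the two density theorems establishing Proposition \ref{proprimos}) used the finiteness of $U(R)$ solely through the consequence that $U(R)$ is not open; this is exactly your second, ``alternative'' argument, and you correctly isolate the two places where non-openness is actually invoked (density of $R^0\setminus U(R)$ in $M(R)$, and $\sigma_x^0\neq U(R)$), both of which follow from the key observation that $U(R)\subseteq\sigma_k^0$ for every $k\in R^0$, so that any nonempty open set contained in $U(R)$ must equal $U(R)$. Your first argument, by contrast, bypasses the density machinery entirely: from finitely many maximal ideals you get representatives $p_1,\dots,p_n$ of all associate classes of irreducibles, and unique factorization gives $\sigma_{p_1\cdots p_n}^0\subseteq U(R)$, while Theorem \ref{thsobreUnidades}(4) gives the reverse inclusion, exhibiting $U(R)$ as a basic open set. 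This direct route is shorter and makes the logical content of the corollary transparent (non-openness of $U(R)$ is exactly the obstruction to writing $U(R)$ as some $\sigma_x^0$), whereas the paper's route has the advantage of reusing, and thereby clarifying the true hypotheses of, the already-established density results. Both are valid; the only points worth being careful about, which you handle correctly, are that ``non-unit'' should be read as ``nonzero non-unit'' when passing to $R^0$, and that $n\geq 1$ since $R$ is not a field.
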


Finally, we want to present the following theorem.

\begin{theorem}
    $M(R)$ is not compact.
\end{theorem}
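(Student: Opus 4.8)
The plan is to exhibit an explicit open cover of $R^0$ by basic open sets that admits no finite subcover, using the infinitude of prime elements guaranteed by Proposition \ref{proprimos}. Specifically, I would consider the family $\mathcal{U}=\{\sigma_p^0 : p\in\mathcal{P}\}$, taking one representative $\sigma_p^0$ for each associate class of prime elements. Each $\sigma_p^0$ is a basic open set of $M(R)$, so $\mathcal{U}$ is a collection of open sets; the two things to verify are that $\mathcal{U}$ covers $R^0$ and that no finite subfamily covers $R^0$.

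First I would check that $\mathcal{U}$ is indeed a cover. The key observation is that for a prime $p$, since $\langle p\rangle$ is maximal, one has $x\in\sigma_p^0$ if and only if $p\nmid x$ (because $\langle p\rangle+\langle x\rangle$ equals $R$ when $p\nmid x$ and equals $\langle p\rangle$ otherwise). Given any $x\in R^0$: if $x\in U(R)$ then $x\in\sigma_p^0$ for every prime $p$; and if $x$ is a nonzero non-unit, then $x$ has only finitely many prime divisors (up to associates), while $\mathcal{P}$ is infinite by Proposition \ref{proprimos}, so there exists a prime $p$ with $p\nmid x$, whence $x\in\sigma_p^0$. Thus every element of $R^0$ lies in some member of $\mathcal{U}$.

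Next I would rule out a finite subcover. Suppose, toward a contradiction, that $\{\sigma_{p_1}^0,\dots,\sigma_{p_n}^0\}\subset\mathcal{U}$ covers $R^0$. Set $x=p_1 p_2\cdots p_n$. Since $R$ is a domain, $x\in R^0$, and $p_i\mid x$ for each $i$, so by the criterion above $x\notin\sigma_{p_i}^0$ for every $i$. Hence $x$ is covered by none of the chosen sets, a contradiction. Therefore $\mathcal{U}$ has no finite subcover, and $M(R)$ is not compact.

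The main obstacle --- or rather the essential ingredient --- is the verification that $\mathcal{U}$ genuinely covers $R^0$, since this is exactly the point at which the infinitude of primes is needed: in a PID with only finitely many prime classes (such as a discrete valuation ring) the analogous family would fail to cover the non-units, so the hypotheses of this subsection, via Proposition \ref{proprimos}, are precisely what make the argument go through. The finite-subcover obstruction, by contrast, is the familiar Euclid-style product trick and is routine.
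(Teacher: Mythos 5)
Your proposal is correct and is exactly the argument the paper intends: the paper's proof consists of the single line ``consider the cover $\{\sigma_p^0 : p\in\mathcal{P}\}$,'' and you have filled in precisely the details it leaves implicit (the criterion $x\in\sigma_p^0\iff p\nmid x$, the use of Proposition \ref{proprimos} to see the family covers $R^0$, and the Euclid-style product to defeat any finite subcover). No discrepancies to report.
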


\begin{proof}
The proof is straightforward. Consider the cover $\{\sigma_p^0:p\in \mathcal{P}\}$.
\end{proof}
 
\section{Final Comment}

This manuscript is an introductory work that generalizes the topological space $X:=(\mathbb{N},\tau_M)$ in the following sense: If $R=\mathbb{Z}$, then $X$ is the topological space obtained by equipping $\mathbb{N}$ with the subspace topology induced by $M(R)$. Furthermore, it can be widely extended (the studies conducted on the Golomb topology can be replicated as much as possible). For example, one can study the properties of the quotient topology induced by the Macías topology on the semigroup of all associate-classes of non-zero elements in an integral domain $R$, similar to what was done with the Golomb topology in \cite{knopemacher1997topologies}. A similar study to that conducted for the Golomb topology in \cite{clark2019note} can also be carried out. Moreover, one can study the homeomorphism problem in $M(R)$, just as has been done and continues to be done with the Golomb topology, see \cite[Section 3]{clark2019note} . In this regard, the following problem is posed.

\begin{problem}
Let $R$ and $S$ be countably infinite semiprimitive integral domains. Decide whether $M(R)$ and $M(S)$ are homeomorphic.
\end{problem}
\printbibliography
\end{document}